\newtheorem{lemma}{Lemma}[section]
\newtheorem{theorem}{Theorem}[section]
\newtheorem{corollary}{Corollary}[section]
\newtheorem{proposition}{Proposition}[section]
\newtheorem{open problem}{Open problem}[section]
\newtheorem{conjecture}{Conjecture}[section]
\newtheorem*{summary of results}{Summary of results}
\newtheorem{remark}{Remark}[section]
\newtheorem{example}{Example}[section]
\newtheorem{thm1}{Theorem}
\begin{document}

\author{Karl-Olof Lindahl\\
School of Mathematics and Systems Engineering\\
V\"{a}xj\"{o} University, 351 95, V\"{a}xj\"{o}, Sweden\\
\texttt{karl-olof.lindahl@lnu.se}}

\title{Divergence and convergence of conjugacies in non-Archimedean 
dynamics\footnote{\textbf{Published in Contemporary matematics, {A}mer. {M}ath. {S}oc., Vol. 508, pp. 89--109, 2010}}}

\date{September 30, 2008}

\maketitle



%
%

\begin{abstract}
We continue the study in \cite{Lindahl:2004} of the
linearizability near an indifferent fixed point of a power series
$f$, defined over a field of prime characteristic $p$. It is known
since the work of Herman and Yoccoz \cite{HermanYoccoz:1981} in
1981 that Siegel's linearization theorem \cite{Siegel:1942} is
true also for non-Archimedean fields. However, they also showed
that the condition in Siegel's theorem is `usually' not satisfied
over fields of prime characteristic. Indeed, as proven in
\cite{Lindahl:2004}, there exist power series $f$ such that the
associated conjugacy function diverges.
We prove that if the 
degrees of the monomials of a power series $f$ are divisible by
$p$, then $f$ is analytically linearizable.
We find a lower (sometimes the best) bound of the size of the
corresponding linearization disc. In the cases where we find the exact
size of the linearization disc, we show, using the Weierstrass degree of
the conjugacy, that $f$ has an indifferent
periodic point on the boundary. We also give a
class of polynomials containing a 
monomial of degree prime to $p$, such that the conjugacy diverges.
\end{abstract}

\vspace{1.5ex}\noindent {\bf Mathematics Subject Classification
(2000):} 32P05, 32H50, 37F50, 11R58

\vspace{1.5ex}\noindent {\bf Key words:} dynamical system,
linearization, non-Archimedean field

\section{Introduction}

The study of complex dynamical systems of iterated analytic functions
begins with the description of the local behavior near fixed points, see
\cite{Carleson/Gamelin:1991,Beardon:1991,Milnor:2000}. Recall that, given a complete valued field $K$, a power series
$f\in K[[x]]$ of the form
\[
f(z)=\lambda z+a_2z^2+a_3z^3\dots, \quad |\lambda |=1,
\]
is said to be analytically linearizable at the indifferent fixed point  at the origin if there is a convergent power series 
solution $g$ to the following form of the Schr\"{o}der functional equation(SFE)
\begin{equation}\label{schroder functional equation}
g\circ f(x)=\lambda g(x), \quad \lambda =f'(0),
\end{equation}
which conjugates $f$ to its linear part. 
The coefficients of the formal solution $g$ of (\ref{schroder
functional equation}) must satisfy a recurrence relation of the form
\[
b_k=\frac{1}{\lambda (1-\lambda^{k-1})}C_k(b_1,\dots,b_{k-1}).
\]
If $\lambda $ is close to a root of unity, the convergence of $g$ then generates a delicate problem of small divisors. 
 In 1942 Siegel proved in his celebrated paper
\cite{Siegel:1942} that the condition
\begin{equation}\label{Siegel condition}
|1- \lambda^n|\geq Cn^{-\beta} \quad\text{for some real numbers
$C,\beta >0$},
\end{equation}
on $\lambda$ is sufficient for convergence in the complex field
case. Later,  Brjuno \cite{Brjuno:1971} proved that the weaker
condition
\begin{equation}\label{condition brjuno}
-\sum_{k=0}^{\infty} 2^{-k}\log\left (\inf_{1\leq n\leq 2^{k+1}-1}
|1 - \lambda ^{n}| \right) <+\infty,
\end{equation}
is sufficient. 
In fact, for
quadratic polynomials, the Brjuno condition is not only sufficient
but also necessary as shown by Yoccoz \cite{Yoccoz:1988}.

Meanwhile, since the work of Herman and Yoccoz in 1981 \cite{HermanYoccoz:1981},
there has been an increasing interest in the non-Archimedean analogue
of complex dynamics, see e.g.\@ \cite{Arrowsmith/Vivaldi:1993,Arrowsmith/Vivaldi:1994,Benedetto:2001b,Benedetto:2003a,
Bezivin:2004b,Hsia:2000,Khrennikov:2001a,KhrennikovNilsson:2001,
Li:1996a,Li:2002a,Lindahl:2004,Lindahl:2007,Lubin:1994,Rivera-Letelier:2003thesis,Rivera-Letelier:2003,DeSmedtKhrennikov:1997}. Herman and Yoccoz  proved
that Siegel's theorem is true also for non-Archimedean fields.

However, for complete valued fields of prime characteristic,
which are necessarily non-Archimedean, the problem was still open; 
in characteristic $p>0$, the Siegel condition, and even the weaker Brjuno condition, is only satisfied if $\lambda $ is \emph{trivial}, that is, that $|1-\lambda ^n|=1$ $\forall n \geq 1 $.
If $\lambda $ is non-trivial (e.g.\@ in locally compact fields  all $\lambda $ are non-trivial), then $\lambda$
generates a problem of small divisors. One might therefore conjecture,
as Herman \cite{Herman:1986}, that
for a locally compact, complete valued field of prime 
characteristics, the formal conjugacy `usually'  diverges, even for polynomials
of one variable.
Indeed, it was proven in \cite{Lindahl:2004} that in characteristic $p>0$,
like in complex dynamics, the formal solution may diverge also in the one-dimensional case.
On the other hand, in \cite{Lindahl:2004} it was also proven that the
conjugacy may still converge due to considerable cancellation of
small divisor terms. The main theorem of \cite{Lindahl:2004} stated that quadratic polynomials with non-trivial multipliers are linearizable if and only if the characteristic of the ground field char $K=2$. 

In the present paper we present a new class of polynomials that yield divergence.
We also note that  the conjugacy converges for all power series $f\in K[[x]]$, whoose monomials 
are all of degree divisible by char $K=p$. 
Furthermore, in case of 
%
of convergence, we estimate the radius of
convergence for
the corresponding
\emph{semi-disc}, i.e.\@ the maximal disc $V$ such that the semi-conjugacy 
(\ref{schroder functional equation}) holds for all $x\in V$, and the
 \emph{linearization disc}\footnote{Here we use the term `linearization disc' rather than `Siegel disc', because in non-Archimedean dynamics the Siegel disc is 
often refered to as the larger maximal disc on which $f$ is one-to-one.} $\Delta$,
i.e.\@ the maximal disc $U$,
about the origin, such that the full conjugacy
$g\circ f\circ g^{-1}(x)=\lambda x$,
holds for all $x\in U$.
We also give sufficient conditions, related to the Weierstrass  degree of the conjugacy, there
being a periodic point on the boundary of the linearization disc. The first non-Archimedean
results in this direction were obtained by Arrowsmith and Vivaldi \cite{Arrowsmith/Vivaldi:1994} who showed that $p$-adic power functions may have indifferent periodic points on the boundary. 
In fact, we prove the following theorem, see Lemma \ref{lemma weierstrass degree and per points}.
\begin{thm1}
Let $K$ be a complete algebraically closed non-Archimedean field. Let $f\in K[[x]]$ have a linearization disc $\Delta$ about an indifferent fixed point.  
Suppose that $\Delta$ is rational open, and that the radius of the corresponding semi-disc of $f$ is strictly greater than that of $\Delta$, 
then $f$ has an indifferent
periodic point on the boundary of $\Delta$. 
\end{thm1}
This theorem and other results of the present paper, stated below, support the idea that the presence of indifferent periodic points on the boundary of   a linearization disc about an indifferent fixed point is typical in the non-Archimedean setting.

For a more thorough treatment of the problem and its relation to
earlier works on non-Archimedean and complex dynamics the reader
is referred to \cite{Lindahl:2004}. Estimates of $p$-adic linearization discs were obtained in 
\cite{Lindahl:2004cpp}.

\section{Summary of results}
\subsection{Divergence and convergence}

In the present paper we find a new class of polynomials that yield
divergence.
\begin{thm1}
Let char $K=p>0$ and let $\lambda \in K$,
$|\lambda |=1$. Then, polynomials of the form
\[
f(x)=\lambda x + a_{p+1}x^{p+1}\in K[x], \quad
a_{p+1}\neq 0,
\]
are not analytically linearizable at the fixed point at the origin
if $|1-\lambda |<1$.
\end{thm1}

On the other hand we also prove convergence
for all power series $f$ whose monomials are all of degree divisible by
char$K=p$.

\begin{thm1}\label{theorem convergence introduction}
Let char $K=p>0$ and let $\lambda \in K$,
$|\lambda |=1$, but not a root of unity. Then, convergent power
series of the form
\[
f(x)=\lambda x + \sum_{p\mid i}a_ix^i,
\]
are linearizable at the origin.
\end{thm1}

These results indicate that the convergence depends mutually on
the powers of the monomials of $f$ and the characteristic $p$ of
$K$, `good' powers for convergence being those divisible by $p$, `bad' powers being those prime to $p$. However, the blend
of prime and co-prime powers may sometimes yield convergence,
sometimes not, at least for non-polynomial power series as shown
in \cite{Lindahl:2004}.
%
However, there might be a sharp distinction for polynomials:
\begin{open problem}
Let $K$ be of characteristic $p>0$.
Is there a polynomial of the form $f(x)=\lambda x+
O(x^2)\in K[x]$, with $\lambda $ not a root of
unity satisfying $|1-\lambda ^n|<1$ for some $n\geq 1$, and
containing a monomial of degree prime to $p$, such that the formal
conjugacy $g$ converges?
\end{open problem}

\subsection{Estimates of linearization discs and periodic points}\label{sectiom est of Siegel summary}

Let $K$ be a field of prime characteristic $p$. Let
$\lambda \in K$, not a root of unity, be such that
the integer
\begin{equation}\label{definition m}
m=m(\lambda)=\min\{n\in\mathbb{Z}:n\geq 1,|1-\lambda ^n|<1 \},
\end{equation}
exists. The case in which such an $m$ does not exist was treated
in \cite{Lindahl:2004}; it was shown that if $|1-\lambda ^n|=1$ for all $n\geq1$,
then the linearization disc of a power series 
\[
f(x)=\lambda x +a_2x^2+a_3x^3+\dots,
\]
is either the closed or open disc of radius $1/a$ where $a=\sup_{i\geq 2}|a_i|^{1/(i-1)}$.

Note that, by Lemma \ref{lemma distance
char p} below, $m$ is not divisible by $p$. Given $\lambda$ and hence
$m$, the integer $k'$ is defined by
\begin{equation}\label{definition k'}
k'=k'(\lambda)=\min\{k\in\mathbb{Z}:k\geq 1,p|k, m|k-1 \}.
\end{equation}
Let $a>0$ be a real number. We will associate with the pair
$(\lambda,a)$, the family of power series
\begin{equation}
\mathcal{F}_{\lambda,a}^p(K)=\left\{\lambda
x +\sum_{p|i} a_ix^i\in K[[x]]:a=\sup_{i\geq 2}|a_i|^{1/(i-1)}\right\},
\end{equation}
and the real numbers
\begin{equation}\label{definition semi radius}
\rho=\rho(\lambda,a)=\frac{|1-\lambda ^m|^{\frac{1}{mp}}}{a},
\end{equation}
and
\begin{equation}\label{definition radius of the linearization disc}
\sigma=\sigma (\lambda,a)=\frac{|1-\lambda
^m|^{\frac{1}{k'-1}}}{a},
\end{equation}
respectively.

As stated in Theorem \ref{theorem convergence introduction}, power
series in the family $\mathcal{F}_{\lambda,a}^p(K)$ are
linearzable at the origin. Given $f$, the corresponding conjugacy
function $g$, will be defined as the unique power series solution
to the Schr\"{o}der functional equation (\ref{schroder functional
equation}), with $g(0)=0$ and $g'(0)=1$.

In Section \ref{section convergence} we use the ansatz of a power
series solution to the Schr\"{o}der functional equation, to obtain
estimates of the coefficients of $g$, and its radius of
convergence. Moreover, applying a result of Benedetto (Proposition
\ref{proposition one-to-one} below), we estimate the radius of
convergence for the inverse $g^{-1}$. The main result can be
stated in the following way.
\begin{thm1}
Let $f\in \mathcal{F}_{\lambda,a}^p(K)$. Then, the
semi-conjugacy $g\circ f (x)=\lambda g(x)$ holds on the
open disc $D_{\rho}(0)$. Moreover, the full conjugacy $g\circ f
\circ g^{-1}(x)=\lambda x$ holds on $D_{\sigma}(0)$.
\end{thm1}

Under further assumptions on $f$, the linearization disc may contain the
larger disc $D_{\rho}(0)$.
\begin{thm1}
Let $f\in \mathcal{F}_{\lambda,a}^p(K)$ be of the form
\[
f(x)=\lambda x +\sum_{i\geq i_0}a_ix^i,
\]
for some integer $i_0>k'$. Then, the full conjugacy $g\circ f
\circ g^{-1}(x)=\lambda x$ holds on a disc larger than or equal to
$D_{\rho}(0)$ or the closed disc $\overline{D}_{\rho}(0)$,
depending on whether $g$ converges on $\overline{D}_{\rho}(0)$ or
not.
\end{thm1}

Note that the estimate of the linearization disc in Theorem
\ref{theorem convergence introduction} is maximal in the sense
that in $\widehat{K}$, the completion of the algebraic
closure of $K$, quadratic polynomials have a fixed point
on the sphere
$S_{\sigma}(0)$ if $m(\lambda)=1$, breaking the conjugacy there.
In fact, the estimate is maximal in a broader sense, according to
the following theorem.

\begin{thm1}\label{theorem k' as the weierstrass degree summary}
Let $f\in \mathcal{F}_{\lambda,a}^p(K)$. Suppose
$a=|a_{k'}|^{1/(k'-1)}$ and $|a_i|<a^{i-1}$ for all $i<k'$. Then,
$D_{\sigma}(0)$ is the linearization disc of $f$ about the origin. In
$\widehat{K}$ we have  $\deg
(g,{\overline{D}_{\sigma}(0)})=k'$.
Furthermore, $f$ has an indifferent periodic point in
$\widehat{K}$ on the sphere $S_{\sigma}(0)$ of period
$\kappa\leq k'$, with multiplier $\lambda ^{\kappa }$.
\end{thm1}

Here, $\deg(g,D)$ denotes the Weierstrass degree of $g$ on the
disc $D$, as defined in Section \ref{section linearization discs and
periodic points}. The Weierstrass degree is the same as the notion
of degree as 'the number of pre-images of a given point, counting
multiplicity'. Since we assume that $g(0)=0$ and $g'(0)=1$,
$\deg(g,\overline{D}_{\sigma}(0))=k'$ means that in the algebraic
closure $\widehat{K}$, $g$ maps the disc
$\overline{D}_{\sigma}(0)$ onto itself exactly $d$-to-$1$,
counting multiplicity.

The result in Theorem \ref{theorem k' as the weierstrass degree
summary} is based on Lemma \ref{lemma weierstrass degree and per points}
that shows that if there is a shift of the value of the
Weierstrass degree from $1$ to $d>1$,  of the conjugacy function
on a sphere $S$, then there is an indifferent  periodic point of period $\kappa\leq d$,  on the sphere $S$.

\section{Preliminaries}

Throughout this paper $K$ is a field of characteristic
$p>0$, complete with respect to a nontrivial absolute value
$|\cdot |_K$. That is, $|\cdot |_K$ is a multiplicative function
from $K$ to the nonnegative real numbers with $|x|_K=0$
precisely when $x=0$, and nontrivial in the sense that it is not
identically $1$ on $K^*$, the set of all nonzero elements
in $K$. If a field $L$ is equipped with an absolute
value, we say that $L$ is a \emph{valued} field. In fact, all
valued fields of strictly positive characteristic are
non-Archimedean. In what follows, we often use the shorter
notation $|\cdot |$ instead of $|\cdot |_K$.

Recall that a non-Archimedean field is a field $K$ equipped with a
non-trivial absolute value $|\cdot |$, satisfying the following
strong or ultrametric triangle inequality:
\begin{equation}\label{sti}
|x+y| \leq  \max[|x|,|y|],\quad\text{for all $x,y\in K$}.
\end{equation}
One useful consequence of ultrametricity is that for any $x,y\in
K$ with $|x|\neq |y|$, the inequality (\ref{sti}) becomes an
equality. In other words, if $x,y\in K$ with $|x|<|y|$, then
$|x+y|=|y|$.

For a field $K$ with absolute value $|\cdot|$ we define the
\emph{value group} as the image
\begin{equation}\label{def-value group}
|K^{*}|=\{|x|:x\in K^* \}.
\end{equation}
Note that $|K^*|$ is a multiplicative subgroup of the positive
real numbers. We will also consider the full image
$|K|=|K^*|\cup\{0\}$. The absolute value $|\cdot |$ is said to be
\emph{discrete} if the value group is cyclic, that is if there is
is an element $\pi\in K$ such that $|K^{*}|=\{|\pi |^n: n\in
\mathbb{Z}\}$. The absolute value $|\cdot |$ can be extended to an
absolute value on the algebraic closure of $K$. We shall denote by
$\widehat{K}$ the completion of the algebraic closure of $K$ with
respect to $|\cdot|$. The fact that $\widehat{K}$ is algebraically
closed and that $|\cdot |$ is nontrivial forces the value group
$|\widehat{K}^*|_{\widehat{K}}$ to be dense on the positive real
line. In particular, $|\cdot |_{\widehat{K}}$ cannot be discrete.

Standard examples of non-Archimedean fields include the $p$-adics,
see for example
\cite{Koblitz:1984}, and
various function fields, see for example
\cite{Cassels:1986}. The $p$-adics include
the $p$-adic integers and their extensions. These fields are all
of characteristic zero. The most important function fields include
fields of formal Laurent series over various fields. These can be
of any characteristic.

\begin{example}\label{example laurent series}
Let $\mathbb{F}$ be a field of characteristic $p>0$, and fix
$0<\epsilon <1$. Let $K=\mathbb{F}((T))$ be the field of
all formal Laurent series in variable $T$, and with coefficients
in the field $\mathbb{F}$. Then $K$ is also of
characteristic $p$. An element $x\in K$ is of the form
\begin{equation}\label{laurent series}
x=\sum_{j\geq j_0} x_jT^j, \quad x_{j_0}\neq 0, \text{ } x_j\in
\mathbb{F},
\end{equation}
for some integer $j_0\in \mathbb{Z}$. This field is complete with
respect to the absolute value for which
\begin{equation}\label{definition absolute value}
|\sum_{j\geq j_0} x_jT^j|=\epsilon ^{j_0}.
\end{equation}
Note that $j_0$ is the order of the zero (or negative the order of
the pole) of $x$ at $T=0$. Let us also note that $|\cdot |$ is the
trivial valuation on $\mathbb{F}$, the subfield consisting of all
constant power series in $K$. In this case
$|K^*|$ is discrete and consists of all nonzero integer
powers of $\epsilon $. The value group of the completion of the
algebraic closure $|\widehat{K}^*|$ is not discrete and
consists of all nonzero rational powers of $\epsilon$. Moreover,
$K$ can be viewed as the completion of the field of
rational functions over $\mathbb{F}$ with respect to the absolute
value (\ref{definition absolute value}) (see, e.g.\@
\cite{Cassels:1986}).
\end{example}

Given an element $x\in K$ and real number $r>0$ we denote by
$D_{r}(x)$ the open disc of radius $r$ about $x$, by
$\overline{D}_r(x)$ the closed disc, and by $S_{r}(x)$ the sphere
of radius $r$ about $x$. If $r\in|K^*|$ (that is if $r$ is
actually the absolute value of some nonzero element of $K$), we
say that $D_{r}(x)$, $\overline{D}_r(x)$, and $S_r(x)$ are
\emph{rational}. Note that $S_r(x)$ is non-empty if and only if it
is rational. If $r\notin |K^*|$, then we will call
$D_{r}(x)=\overline{D}_r(x)$ an \emph{irrational} disc. In
particular, if $a\in K$ and $r=|a|^s$ for some rational number
$s\in\mathbb{Q}$, then $D_{r}(x)$ and $\overline{D}_r(x)$ are
rational considered as discs in $\widehat{K}$. However, they may
be irrational considered as discs in $K$. Note that all discs are
both open and closed as topological sets, because of
ultrametricity. However, as we will see in Section \ref{section
non-Archimedean power series} below, power series distinguish
between rational open, rational closed, and irrational discs.

Again by ultrametricity, any point of a disc can be considered its
center. In other words, if $b\in D_r(a)$, then $D_r(a)=D_r(b)$;
the analogous statement is also true for closed discs. In
particular, if two discs have nonempty intersection, then they are
concentric, and therefore one must contain the other.

The open and closed unit discs, $D_1(0)$ and $\overline D_1(0)$,
respectively play a fundamental role in non-Archimedean analysis,
because of their algebraic properties. In fact, due to the strong
triangle inequality (\ref{sti}), $\overline D_1(0)$ is a ring and
$D_1(0)$ is the unique maximal ideal in $\overline D_1(0)$. The
corresponding quotient field,
\[
\Bbbk=\overline D_1(0)/D_1(0)
\]
is called the \emph{residue field} of $K$. The residue field
$\Bbbk$ will also be of characteristic $p$. Hence we always have
$\Bbbk\supseteq \mathbb{F}_p$. The absolute value $|\cdot |$ is
trivial on $\Bbbk$. For $x\in\overline D_1(0)$, we will denote by
$\overline x$ the \emph{reduction} of $x$ modulo $D_1(0)$.


\subsection{The formal solution}\label{section the formal solution}

The coefficients of the formal solution of (\ref{schroder
functional equation}) must satisfy the recurrence relation
\begin{equation}\label{bk-equation}
b_k=\frac{1}{\lambda (1-\lambda^{k-1})}\sum_{l=1}^{k-1}b_l%
(\sum\frac{l!}{\alpha_1!\cdot ...\cdot
\alpha_k!}a_1^{\alpha_1}\cdot ...\cdot a_k^{\alpha_k})
\end{equation}
where $\alpha _1,\alpha _2,\dots,\alpha _k$ are nonnegative
integer solutions of
\begin{equation}\label{index-equations}
   \left\{\begin{array}{ll}
            \alpha_1+...+\alpha_k=l,\\
            \alpha_1+2\alpha_2...+k\alpha_k=k,\\
            1\leq l\leq k-1.
        \end{array}
   \right.
\end{equation}

The convergence of $g$ will depend mutually on the denominators
$|1-\lambda ^{k-1}|$ and the factorial terms in
(\ref{bk-equation}). In view of Lemma \ref{lemma distance char p},
the denominator is small if $k-1$ is divisible by $m$ and a large
power of the characteristic $p$. In fact, the conjugacy may
diverge as in Theorem \ref{theorem divergence p+1} below. On the
other hand, the factorial term
\[
\frac{l!}{\alpha_1!\cdot ...\cdot \alpha_k!}
\]
is always an integer and hence of modulus zero or one, depending
on whether it is divisible by $p$ or not. Accordingly, factorial
terms may extinguish small divisor terms as in Theorem
\ref{theorem convergence divisible by p} below.


\subsection{Arithmetic of the multiplier}

Let $\lambda\in S_1(0)$, be an element in the unit sphere. The
geometry of the unit sphere and the roots of unity in $K$
was discussed in \cite{Lindahl:2004}. We are concerned with
calculating the distance
\[
|1-\lambda ^n|, \quad\text{for $n=1,2,\dots$}.
\]
Note that if $x,y\in\overline D_1(0)$, then  $|x-y|<1$ if and only
if the reductions $\overline{x},\overline{y}$ belong to the same
residue class. Consequently,
\begin{equation}\label{distance and residue}
|1-\lambda ^n|<1\quad\iff\quad\overline{\lambda}^n -
1=0\quad\text{in $\Bbbk$}.
\end{equation}
Hence, the behavior of $1-\lambda ^n$ falls into one of two
categories, depending on whether the reduction of $\lambda $ is a
root of unity or not. More precisely we have the following lemma
that was proven in \cite{Lindahl:2004}.
\begin{lemma}[Lemma 3.2 \cite{Lindahl:2004}]\label{lemma distance char p}
Let char $K=p>0$ and let $\Bbbk$ be the residue field of
$K$. Let $\Gamma (\Bbbk)$ be the set of roots of unity in
$\Bbbk$. Suppose $\lambda\in S_1(0)$. Then,
\begin{enumerate}
  \item $\overline{\lambda}\notin\Gamma(\Bbbk)$ $\iff$ $|1-\lambda ^n|=1$
  for all integers $n\geq 1$.
  \item If $\overline{\lambda}\in\Gamma(\Bbbk)$, then the integer $m=\min\{n\in\mathbb{Z}:n\geq 1,|1-\lambda ^n|<1\}$
  exists. Moreover, $p\nmid m$ and
\begin{equation}\label{distance char p}
  \left |1- \lambda ^n \right |=
  \left\{\begin{array}{ll}
         1, & \textrm{ if \quad $m\nmid n$,}\\
        |1-\lambda ^m|^{p^j}, & \textrm{ if \quad $n=map^j$, $p\nmid a$.}
  \end{array}\right.
\end{equation}

\end{enumerate}

\end{lemma}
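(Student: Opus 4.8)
The plan is to leverage the equivalence (\ref{distance and residue}), which converts every statement about $|1-\lambda^n|$ into the question of whether $\overline{\lambda}^n=1$ in the residue field $\Bbbk$. Part 1 is then immediate: $\overline{\lambda}\notin\Gamma(\Bbbk)$ means $\overline{\lambda}^n\neq 1$ for every $n\geq 1$, which by (\ref{distance and residue}) is precisely the assertion that $|1-\lambda^n|=1$ for all $n\geq 1$. For Part 2, if $\overline{\lambda}\in\Gamma(\Bbbk)$ then some power of $\overline{\lambda}$ equals $1$, so the defining set for $m$ is nonempty and $m$ exists; in fact $m$ is the multiplicative order of $\overline{\lambda}$ in $\Bbbk^*$, whence $\overline{\lambda}^n=1$ if and only if $m\mid n$. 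To see that $p\nmid m$, suppose $p\mid m$ and write $m=pm'$; then $\overline{\lambda}^{m'}$ is a $p$-th root of unity in $\Bbbk$, but the identity $x^p-1=(x-1)^p$ valid in characteristic $p$ forces $\overline{\lambda}^{m'}=1$, contradicting the minimality of $m$.

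For the distance formula I would isolate two building-block computations. Put $\delta=\lambda^m-1$, so that $|\delta|=|1-\lambda^m|<1$. First, raising to a power $a$ coprime to $p$ preserves the distance: expanding $(1+\delta)^a-1=a\delta+\binom{a}{2}\delta^2+\cdots$, every binomial coefficient is an integer of absolute value at most $1$, while $p\nmid a$ gives $|a|=1$ since the prime field carries the trivial valuation; thus the linear term has absolute value $|\delta|$ and strictly dominates all higher-order terms, so by ultrametricity $|1-\lambda^{ma}|=|\delta|$. Second, raising to $p^j$ exploits the characteristic directly: the Frobenius identity yields $(1+\delta)^{p^j}=1+\delta^{p^j}$ \emph{exactly}, whence $|1-\lambda^{mp^j}|=|\delta|^{p^j}$.

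These two blocks combine to give the general case. For $n=map^j$ with $p\nmid a$, write $\lambda^n=(\lambda^{mp^j})^a=(1+\delta^{p^j})^a$ and apply the first block with $\delta$ replaced by $\eta=\delta^{p^j}$, which still satisfies $|\eta|<1$; this gives $|1-\lambda^n|=|\eta|=|\delta|^{p^j}=|1-\lambda^m|^{p^j}$. The remaining case $m\nmid n$ is handled directly by (\ref{distance and residue}): there $\overline{\lambda}^n\neq 1$, so $|1-\lambda^n|=1$. The conceptual heart of the argument is the Frobenius computation, which is exactly where characteristic $p$ produces the exponent $p^j$; the coprime-power estimate is routine ultrametric bookkeeping, its only subtlety being the observation that integers prime to $p$ are units of absolute value one.
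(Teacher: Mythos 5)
Your proof is correct and complete. The paper does not actually prove this lemma itself --- it is quoted verbatim as Lemma 3.2 of \cite{Lindahl:2004} --- so there is no in-paper argument to compare against; your route, reducing everything to the residue field via (\ref{distance and residue}), identifying $m$ as the multiplicative order of $\overline{\lambda}$, ruling out $p\mid m$ with $x^p-1=(x-1)^p$, and then combining the exact Frobenius identity $(1+\delta)^{p^j}=1+\delta^{p^j}$ with ultrametric dominance of the linear term $a\delta$ when $p\nmid a$, is the standard argument and fills the omission correctly.
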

\vspace{1.5ex} \noindent Note that category 2 in Lemma \ref{lemma
distance char p} is always non-empty since
$\Bbbk\supseteq\mathbb{F}_p$. Moreover, if $\Bbbk\subseteq
\overline{\mathbb{F}}_p$, then $\Gamma(K)=\Bbbk^{*}$ and
all $\lambda\in S_1(0)$ falls into category 2. Consequently
category 1 is empty in this case. This happens for example when
$K$ is locally compact, see e.g.\@ \cite{Cassels:1986}.
\begin{proposition}
Let $K$ be a non-Archimedean field with absolute value $|\cdot |$.
Then $K$ is locally compact (w.r.t. $|\cdot |$) if and only if all
three of the following conditions are satisfied: (i) $K$ is
complete, (ii) $|\cdot |$ is discrete, and (iii) the residue field
is finite.
\end{proposition}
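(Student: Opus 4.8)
The plan is to prove the standard characterization of local compactness for non-Archimedean valued fields, so I would establish the two implications separately.

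For the forward direction, suppose $K$ is locally compact. Then $K$ is complete, since local compactness forces completeness for metric groups (the completion would properly contain $K$ as a dense, hence non-closed, subspace, contradicting that a locally compact subgroup of a topological group is closed). To see (ii), note that local compactness makes $\overline{D}_1(0)$ compact; I would argue that if $|\cdot|$ were not discrete, the value group would accumulate at some point of $(0,1]$, producing a sequence of radii $r_n\to r$ with the corresponding spheres nonempty, and then extract points $x_n\in S_{r_n}(0)$ whose pairwise distances $|x_n-x_m|=\max(r_n,r_m)$ stay bounded below by a fixed positive constant near $r$, contradicting the compactness of $\overline{D}_1(0)$ via sequential compactness. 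For (iii), compactness of $\overline{D}_1(0)$ yields a finite subcover of the open cover by the residue classes (each class being a coset of the open ideal $D_1(0)$, hence open); since distinct residue classes are disjoint, there can be only finitely many, so $\Bbbk$ is finite.

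For the converse, assume (i), (ii), (iii). The key step is to show $\overline{D}_1(0)$ is compact, from which local compactness follows because every point has the compact neighborhood $\overline{D}_r(x)$ (a scaled, translated copy of $\overline{D}_1(0)$, using that $r\in|K^*|$ by discreteness). To prove compactness of $\overline{D}_1(0)$, I would use completeness and reduce to total boundedness. By discreteness fix a uniformizer $\pi$ with $|\pi|=c<1$ generating the value group. The plan is to show that for each $n$, the ring $\overline{D}_1(0)$ is covered by finitely many discs of radius $c^n$; equivalently, that $\overline{D}_1(0)/D_{c^n}(0)$ is finite. Since $D_{c^n}(0)=\pi^n\overline{D}_1(0)$, this quotient is filtered by the successive quotients $\pi^j\overline{D}_1(0)/\pi^{j+1}\overline{D}_1(0)$, each isomorphic as an additive group to $\Bbbk$, which is finite by (iii). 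Hence the quotient has cardinality $|\Bbbk|^n<\infty$, giving total boundedness; together with completeness this yields compactness.

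The main obstacle will be the discreteness step in the forward direction: extracting from a non-discrete value group an explicit sequence in $\overline{D}_1(0)$ with no convergent subsequence requires care, since one must ensure the chosen radii actually lie in $|K^*|$ (so the spheres are nonempty) and that the resulting points are genuinely separated in the ultrametric. I would handle this by exploiting that a non-cyclic subgroup of $(\mathbb{R}_{>0},\cdot)$ is dense, so that one can select radii $r_n\in|K^*|$ with $r_n\uparrow r\leq 1$ strictly increasing; then for $n<m$ one has $|x_n-x_m|=r_m\geq r_1>0$, so the sequence is uniformly separated and admits no Cauchy subsequence, contradicting compactness. The remaining steps are routine once the interplay between discreteness, the filtration of $\overline{D}_1(0)$ by powers of $\pi$, and finiteness of the residue field is set up correctly.
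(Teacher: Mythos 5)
Your argument is correct, but there is nothing in the paper to compare it against: the proposition is stated without proof, the author simply deferring to a standard reference (Cassels, \emph{Local Fields}). What you have written is the classical textbook argument, and it holds up. The forward direction correctly extracts all three conditions from compactness of $\overline{D}_1(0)$ (closedness of locally compact subgroups for completeness; a uniformly separated sequence on spheres of radii accumulating from below for discreteness, using that a non-cyclic subgroup of $(\mathbb{R}_{>0},\cdot)$ is dense; the disjoint open cover by residue classes for finiteness of $\Bbbk$). The converse via ``complete plus totally bounded'' with the filtration by powers of a uniformizer is likewise standard and correct. Two trivial points you may wish to tidy: the claim that local compactness makes $\overline{D}_1(0)$ compact itself needs the one-line scaling argument (a compact neighborhood of $0$ contains some closed, hence compact, $\overline{D}_{\epsilon}(0)$, and multiplication by $\pi^{-n}$ carries it onto a compact set containing the closed set $\overline{D}_1(0)$, using nontriviality of $|\cdot|$); and in the converse you write $D_{c^n}(0)=\pi^n\overline{D}_1(0)$, whereas $\pi^n\overline{D}_1(0)=\overline{D}_{c^n}(0)$ (when the value group is generated by $c$ one has $D_{c^n}(0)=\overline{D}_{c^{n+1}}(0)$, an off-by-one that does not affect the finiteness count).
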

On the other hand if $K$ is algebraically closed, then
$\Bbbk$ is infinite and $K$ cannot be locally compact. In
this case $\Bbbk\supseteq \overline{\mathbb{F}}_p$.

We shall only consider the case in which $\lambda $ belongs to
category 2. The other case was treated in \cite{Lindahl:2004}.

\subsection{Mapping properties}\label{section non-Archimedean power series}

Let $K$ be a complete non-Archimedean field. Let $h$ be a power
series over $K$ of the form
\begin{equation*}
h(x)=\sum_{k=0}^{\infty}c_k(x-\alpha )^k, \quad c_k\in K.
\end{equation*}
Then $h$ converges on the open disc $D_{R_h}(\alpha )$ of radius
\begin{equation}\label{radius of convergence}
R_h = \frac{1}{\limsup |c_k| ^{1/k}},
\end{equation}
and diverges outside the closed disc $\overline{D}_{R_h}(\alpha
)$. The power series $h$ converges on the sphere $S_{R_h}(\alpha
)$ if and only if
\[
\lim_{k\to\infty}|c_k| R_h ^k=0.
\]
The following proposition is useful to estimate the size of a
linearization disc, i.e.\@ the maximal disc on which the full conjugacy,
$g\circ f\circ g^{-1}=\lambda x$,  holds.

\begin{proposition}[Lemma 2.2 \cite{Benedetto:2003a}]\label{proposition-discdegree}
Let $K$ be algebraically closed. Let
$h(x)=\sum_{k=0}^{\infty}c_k(x-x_0)^k$ be a nonzero power series
over $K$ which converges on a rational closed disc
$U=\overline{D}_R(x_0)$, and let $0<r\leq R$. Let
$V=\overline{D}_r(x_0)$ and $V'=D_r(x_0)$. Then
  \begin{eqnarray*}
    s &=& \max\{|c_k|r^k:k\geq 0\},\\
    d &=& \max\{k\geq 0:|c_k|r^k=s\},\quad and\\
    d'&=& \min\{k\geq 0:|c_k|r^k=s\}
  \end{eqnarray*}
are all attained and finite. Furthermore,
\begin{enumerate}[a.]
 \item $s\geq |f'(x_0)|\cdot r$.
 \item if $0\in f(V)$, then $f$ maps $V$ onto $\overline{D}_s(0)$
 exactly $d$-to-1 (counting multiplicity).
 \item if $0\in f(V')$, then $f$ maps $V'$ onto $D_s(0)$
 exactly $d'$-to-1 (counting multiplicity).
\end{enumerate}
\end{proposition}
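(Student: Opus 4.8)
The plan is to reduce all three parts to the \emph{Newton polygon} of $h-y_0$ on the disc $\overline{D}_r(x_0)$. First I would settle the finiteness claims. Since $r\le R$ and $h$ converges on the closed disc $\overline{D}_R(x_0)$, the terms satisfy $|c_k|r^k\le|c_k|R^k\to 0$, so the nonnegative numbers $|c_k|r^k$ attain a maximum $s$ and only finitely many indices realize it; hence $d$ and $d'$ are well defined and finite, and $s>0$ because $h\not\equiv 0$ and $r>0$. Part a is then immediate: the linear coefficient is $c_1=h'(x_0)$, and $|c_1|r\le\max_k|c_k|r^k=s$ by the definition of $s$.

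The engine of the argument is the following root-counting principle, which I would invoke from Weierstrass preparation. If a nonzero power series $\sum_k e_k(x-x_0)^k$ converges on $\overline{D}_r(x_0)$, then, $K$ being algebraically closed, it factors there as $P\cdot U$, where $P$ is a polynomial all of whose roots lie in $\overline{D}_r(x_0)$ and $U$ is a unit of constant modulus (hence zero-free) on the disc. Reading the zeros of $P$ off its Newton polygon yields that, counted with multiplicity, the number of zeros of the series in the closed disc $\overline{D}_r(x_0)$ equals $\max\{k:|e_k|r^k=\max_j|e_j|r^j\}$, while the number in the open disc $D_r(x_0)$ equals $\min\{k:|e_k|r^k=\max_j|e_j|r^j\}$.

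For part b I would first record $h(V)\subseteq\overline{D}_s(0)$, since $|h(x)|\le\max_k|c_k||x-x_0|^k\le s$ for $x\in V$. Now fix $y_0\in\overline{D}_s(0)$ and apply the principle to $H_{y_0}:=h-y_0$. Here the hypothesis $0\in h(V)$ is the crucial input: if $h(x^\ast)=0$ for some $x^\ast\in V$, then $|c_0|=\bigl|\sum_{k\ge1}c_k(x^\ast-x_0)^k\bigr|\le\max_{k\ge1}|c_k|r^k$, so $s$ is already realized at some index $\ge1$ and $|c_0|\le s$. Consequently $|c_0-y_0|\le\max(|c_0|,|y_0|)\le s$, the constant term of $H_{y_0}$ cannot exceed the dominant level, and the \emph{largest} index realizing the maximum for $H_{y_0}$ is the same $d$ as for $h$ (recall $d\ge1$). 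The principle then produces exactly $d$ zeros of $H_{y_0}$ in $V$; since $d\ge1$, every $y_0\in\overline{D}_s(0)$ has a preimage, so $h(V)=\overline{D}_s(0)$ and $h$ is $d$-to-$1$ there.

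Part c runs in parallel on the open disc, the sole change being a strict estimate. If $0\in h(V')$, with $h(x^\ast)=0$ and $|x^\ast-x_0|=\rho<r$, then $|c_0|\le\max_{k\ge1}|c_k|\rho^k<s$, the strictness coming from $\rho<r$ together with the maximum being attained at a finite index. Hence for $y_0\in D_s(0)$ one has $|c_0-y_0|<s$, so the constant term fails to realize the maximum, the \emph{smallest} realizing index for $H_{y_0}$ equals $d'\ge1$, and the same computation gives $h(V')\subseteq D_s(0)$; the principle then yields exactly $d'$ zeros in $V'$ and surjectivity onto $D_s(0)$. I expect the main obstacle to be the careful bookkeeping on the boundary sphere $S_r(x_0)$ — namely justifying the two root-counting formulas (closed versus open disc) from the Newton polygon and verifying that the normalization forced by $0\in h(V)$ genuinely fixes the dominant index — rather than any new idea.
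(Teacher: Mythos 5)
Your argument is correct: the finiteness of $s$, $d$, $d'$ from $|c_k|r^k\to 0$, the observation that $0\in h(V)$ (resp.\ $0\in h(V')$) forces $|c_0|\le s$ (resp.\ $|c_0|<s$) so that subtracting any $y_0\in\overline{D}_s(0)$ (resp.\ $D_s(0)$) leaves the top (resp.\ bottom) index of the Newton polygon at $d$ (resp.\ $d'$), and the root count via Weierstrass preparation, together give exactly the stated conclusions. Note, however, that the paper itself supplies no proof of this proposition — it is imported verbatim as Lemma 2.2 of Benedetto, with only the remark that "Benedetto's proof uses the Weierstrass Preparation Theorem" — so your proof cannot be compared line-by-line with one in the text, but it follows precisely the route the paper indicates.
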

Benedetto's proof uses the Weierstrass Preparation Theorem
\cite{BoschGuntzerRemmert:1984,FresnelvanderPut:1981,Koblitz:1984}.
We will be interested in the special case in that $c_0=x_0=0$. In
this case, we have the following proposition.


\begin{proposition}\label{proposition one-to-one}
Let $K$ be an algebraically closed complete non-Archimedean field
and let $h(x)=\sum_{k=1}^{\infty}c_kx^k$ be a power series over
$K$.
\begin{enumerate}[1.]

 \item Suppose that $h$ converges on the rational closed disc
  $\overline{D}_R(0)$. Let $0<r\leq R$ and suppose that
  \[
   |c_k|r^k\leq |c_1|r\quad \text{ for all } k\geq 2 .
  \]
 Then $h$ maps the open disc $D_{r}(0)$ one-to-one onto
 $D_{|c_1|r}(0)$. Furthermore, if
 \[
 d = \max\{k\geq 1:|c_k|{r}^k=|c_1| r\},
 \]
 then, $h$ maps the  closed disc $\overline{D}_{r}(0)$ onto
 $\overline{D}_{|c_1|r}(0)$ exactly $d$-to-1 (counting
 multiplicity).

 \item Suppose that $h$ converges on the rational open disc
  $D_R(0)$ (but not necessarily on the sphere $S_R(0)$).
  Let $0<r\leq R$ and suppose that
  \[
   |c_k|r^k \leq |c_1|r\quad \text{ for all } k\geq 2 .
  \]
  Then $h$ maps $D_{r}(0)$ one-to-one
  onto $D_{|c_1|r}(0)$.

\end{enumerate}

\end{proposition}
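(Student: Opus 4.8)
The plan is to prove Proposition \ref{proposition one-to-one} as a direct specialization of Benedetto's result, Proposition \ref{proposition-discdegree}, with the particular choices $c_0=x_0=0$. Before invoking that proposition I would record the elementary but essential observation that underlies the whole statement: under the hypothesis $|c_k|r^k\le|c_1|r$ for all $k\ge 2$, the maximum
\[
s=\max\{|c_k|r^k:k\ge 0\}=|c_1|r,
\]
since $c_0=0$ and the $k=1$ term $|c_1|r$ dominates (weakly) all higher terms by assumption. This identifies $s$ explicitly, which is what converts the abstract radius $\overline{D}_s(0)$ in Benedetto's lemma into the concrete target disc $\overline{D}_{|c_1|r}(0)$.

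For part 1, I would argue as follows. First apply Proposition \ref{proposition-discdegree} with $V'=D_r(0)$. Since $h(0)=c_0=0$, the hypothesis $0\in h(V')$ holds, so part c of that proposition applies and tells us $h$ maps $D_r(0)$ onto $D_s(0)=D_{|c_1|r}(0)$ exactly $d'$-to-$1$, where $d'=\min\{k\ge 0:|c_k|r^k=s\}$. The key point is that the strict inequality $|c_k|r^k\le|c_1|r$ forces $d'=1$: the minimal index at which the maximum $s=|c_1|r$ is attained is $k=1$, because $c_0=0$ gives $|c_0|r^0=0<s$ and no smaller positive index exists. Hence $h$ is one-to-one on $D_r(0)$ onto $D_{|c_1|r}(0)$, which is the first assertion. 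For the closed-disc statement I would instead apply part b of Proposition \ref{proposition-discdegree} to $V=\overline{D}_r(0)$, again using $0\in h(V)$, to get that $h$ maps $\overline{D}_r(0)$ onto $\overline{D}_{|c_1|r}(0)$ exactly $d$-to-$1$ with $d=\max\{k\ge 0:|c_k|r^k=s\}$; since $s=|c_1|r$ this $d$ coincides with the quantity in the statement, and the count is automatic.

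For part 2 the difficulty is that Benedetto's proposition as stated requires convergence on a rational \emph{closed} disc, whereas here $h$ is only assumed to converge on the rational open disc $D_R(0)$ and possibly not on the sphere $S_R(0)$. The remedy is an exhaustion argument: for the given $r$ with $0<r\le R$ one can pick any radius $r'$ with $r\le r'<R$ lying in the value group $|K^*|$ (possible because $\widehat K$ has dense value group, $K$ being algebraically closed), so that $h$ converges on the rational closed disc $\overline{D}_{r'}(0)\subset D_R(0)$. On this closed disc part 1 applies at radius $r$, yielding that $h$ maps $D_r(0)$ one-to-one onto $D_{|c_1|r}(0)$. Since the conclusion concerns only the open disc $D_r(0)$ and its image, and since the hypothesis $|c_k|r^k\le|c_1|r$ is exactly the one needed at radius $r$, this gives the desired statement without any reference to behaviour on $S_R(0)$.

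The main obstacle, and the only step requiring genuine care, is this passage from the closed-disc hypothesis of Benedetto's lemma to the open-disc hypothesis of part 2; everything else is bookkeeping about which index realizes the maximum $s$. I would therefore devote the bulk of the write-up to verifying that an intermediate radius $r'\in|K^*|$ with $r\le r'<R$ can always be chosen and that the local multiplicity count on $D_r(0)$ is insensitive to the enlargement from $r$ to $r'$. The remaining verifications—that $s=|c_1|r$ and that the extremal indices $d'$ and $d$ take the claimed values—follow immediately from $c_0=0$ together with the dominance hypothesis.
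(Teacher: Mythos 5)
Your overall strategy---specializing Benedetto's Proposition \ref{proposition-discdegree} with $c_0=x_0=0$, computing $s=|c_1|r$ from the dominance hypothesis, and reading off $d'=1$ and $d=\max\{k\ge 1:\ |c_k|r^k=|c_1|r\}$---is exactly how the paper intends this proposition to be obtained (it is stated there as the special case $c_0=x_0=0$ of Benedetto's lemma, with no further argument supplied), and your treatment of part 1 is correct.

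There is, however, a genuine gap in your part 2. You propose to choose $r'\in|K^*|$ with $r\le r'<R$ and then apply part 1 inside $\overline{D}_{r'}(0)$. Such an $r'$ exists only when $r<R$. In the case $r=R$---which is the only case in which part 2 is not already subsumed by part 1 applied to a slightly larger rational closed subdisc of $D_R(0)$, and which is precisely the case needed later in Lemma \ref{lemma  f one-to-one} and Theorem \ref{theorem convergence divisible by p}---no radius $r'$ with $R\le r'<R$ exists, so the verification to which you propose to devote the bulk of the write-up cannot succeed. The correct move is to exhaust from below rather than to squeeze from above: pick rational radii $r_n<r$ with $r_n\to r$, observe that the hypothesis is inherited at each smaller radius, since for $k\ge 2$
\[
|c_k|r_n^k=|c_k|r^k\left(\frac{r_n}{r}\right)^k\le |c_1|r\left(\frac{r_n}{r}\right)^k\le|c_1|r_n,
\]
apply part 1 on each $\overline{D}_{r_n}(0)$ to conclude that $h$ maps $D_{r_n}(0)$ one-to-one onto $D_{|c_1|r_n}(0)$, and then pass to the union: any two distinct points of $D_r(0)$ lie in a common $D_{r_n}(0)$, which gives injectivity, and $\bigcup_n D_{|c_1|r_n}(0)=D_{|c_1|r}(0)$, which gives surjectivity. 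With that replacement your argument is complete.
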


\begin{lemma}\label{lemma  f one-to-one}
Let $K$ be an algebraically closed complete non-Archimedean field.
Let $f(x)=\lambda x +\sum_{i=2}^{\infty}a_ix^i\in K[[x]]$,
$|\lambda |=1$, be convergent on some non-empty disc. Then, the
real number $a= \sup_{i\geq 2} |a_i|^{1/(i-1)}$ exists and
$|a_i|\leq a^{i-1}$ for all $i\geq 2$. Furthermore, $R_f\geq 1/a$
and $f: D_{1/a}(0)\to D_{1/a}(0)$ is bijective.  If
$|a_i|<a^{i-1}$ for all $i\geq 2$  and $f$ converges on the closed
disc $D_{1/a}(0)$, then $f: \overline{D}_{1/a}(0)\to
\overline{D}_{1/a}(0)$ is bijective. Finally, $f$ cannot be bijective on a (rational)
disc greater than $\overline{D}_{1/a}(0)$.
\end{lemma}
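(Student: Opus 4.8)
The plan is to dispatch the several assertions in turn, in each case reducing to the radius-of-convergence formula (\ref{radius of convergence}), the ultrametric inequality (\ref{sti}), and the two mapping statements, Proposition \ref{proposition one-to-one} and Proposition \ref{proposition-discdegree}.

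First I would settle the existence of $a$ together with $|a_i|\le a^{i-1}$. Convergence of $f$ on a non-empty disc means $R_f>0$, so picking any $r$ with $0<r<R_f$ we have $|a_i|r^i\to 0$; in particular $|a_i|r^i\le 1$ for large $i$, whence $|a_i|^{1/(i-1)}\le r^{-i/(i-1)}\to 1/r$. Thus $(|a_i|^{1/(i-1)})_{i\ge 2}$ is bounded and $a=\sup_{i\ge 2}|a_i|^{1/(i-1)}$ is finite, and $|a_i|\le a^{i-1}$ is just the defining property of this supremum. For $R_f\ge 1/a$ I would use $|a_i|^{1/i}\le a^{(i-1)/i}=a\cdot a^{-1/i}$, so that $1/R_f=\limsup_i|a_i|^{1/i}\le a$.

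Next, for bijectivity of $f$ on the open disc I would check the hypothesis of Proposition \ref{proposition one-to-one} at radius $r=1/a$: since $|a_i|\le a^{i-1}$ and $|\lambda|=1$, we get $|a_i|r^i\le a^{i-1}(1/a)^i=1/a=|\lambda|r$ for all $i\ge 2$, while $R_f\ge 1/a$ guarantees convergence on $D_{1/a}(0)$. Part~2 of the proposition then gives a bijection of $D_{1/a}(0)$ onto $D_{|\lambda|/a}(0)=D_{1/a}(0)$. (When $1/a\notin|K^*|$ the disc is irrational and Part~2's rationality hypothesis is unavailable; there I would instead exhaust $D_{1/a}(0)$ by rational closed subdiscs $\overline{D}_r(0)$ with $r<1/a$, on each of which $ar<1$ forces the strict bound $|a_i|r^i<|\lambda|r$ and hence, by Part~1, a bijection, and take the union.) Under the strengthened hypotheses $|a_i|<a^{i-1}$ the same computation at $r=1/a$ yields $|a_i|r^i<r=|\lambda|r$ for every $i\ge 2$, so in Proposition \ref{proposition one-to-one}.1 the value $s=|\lambda|r$ is attained only at $k=1$, forcing $d=1$ and hence a bijection of $\overline{D}_{1/a}(0)$ onto itself.

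The last, maximality, assertion is the genuinely new point and the main obstacle. I would argue by contradiction: if $f$ were bijective on a rational disc properly containing $\overline{D}_{1/a}(0)$, then $f$ would be injective, and convergent, on some rational closed disc $\overline{D}_r(0)$ with $r>1/a$. But $r>1/a$ means $1/r<a=\sup_{i\ge 2}|a_i|^{1/(i-1)}$, so there is an index $j\ge 2$ with $|a_j|^{1/(j-1)}>1/r$, that is $|a_j|r^j>r=|\lambda|r$. Consequently, in Proposition \ref{proposition-discdegree} the maximum $s=\max_k|c_k|r^k$ obeys $s\ge|a_j|r^j>|c_1|r$, so that $d=\max\{k:|c_k|r^k=s\}\ge 2$; by part~b that proposition makes $f$ a $d$-to-$1$ map of $\overline{D}_r(0)$ with $d\ge 2$, contradicting injectivity. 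The point to get right is precisely that the supremum defining $a$ is the exact threshold at which the dominant term in Benedetto's degree count leaves the linear coefficient $c_1=\lambda$, so that any radius beyond $1/a$ necessarily creates a fold of degree at least two.
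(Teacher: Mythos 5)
Your proof is correct, and for the assertions the paper actually argues it follows essentially the same route: finiteness of $a=\sup_{i\geq 2}|a_i|^{1/(i-1)}$ from convergence of $f$, the bound $R_f\geq 1/a$ from the formula (\ref{radius of convergence}), and part~2 of Proposition \ref{proposition one-to-one} for bijectivity on the open disc. Where you differ is that you prove strictly more than the printed proof does: the paper's proof stops after the open-disc statement, leaving the closed-disc bijectivity and the final maximality claim unargued, whereas you supply both --- the former by observing that the strict inequalities $|a_i|<a^{i-1}$ force the Weierstrass degree $d=1$ in part~1 of Proposition \ref{proposition one-to-one}, and the latter by a clean contradiction via Proposition \ref{proposition-discdegree}: for any rational radius $r>1/a$ the definition of the supremum produces an index $j$ with $|a_j|r^j>|\lambda|r$, so the degree of $f$ on $\overline{D}_r(0)$ is at least $2$ and $f$ cannot be injective there. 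This is exactly the right tool, and it is the step the paper leaves implicit. Your exhaustion of an irrational disc $D_{1/a}(0)$ by rational closed subdiscs is also a genuine refinement, since Proposition \ref{proposition one-to-one} as stated presupposes rational discs and $1/a$ need not lie in $|K^*|$ (cf.\ Remark \ref{remark Lemma bijective}). One cosmetic point: in the closed-disc case you should add that if $1/a\notin|K^*|$ then $\overline{D}_{1/a}(0)=D_{1/a}(0)$ and the claim reduces to the open-disc case, so the application of part~1 at $r=1/a$ is only needed when $1/a$ is rational.
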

\begin{proof}
As $f$ is convergent, we have $\sup
|a_i|^{1/i}<\infty$ and hence we have that
$\sup |a_i|^{1/(i-1)}< \infty$;
\[
\sup |a_i|^{1/(i-1)} \leq \sup_{|a_i|\leq 1} |a_i|^{1/(i-1)} +
\sup_{|a_i|> 1} \left (|a_i|^{1/i}\right )^{i/(i-1)} \leq 1 +
\left (\sup_{|a_i|> 1} |a_i|^{1/i}\right)^2.
\]
Clearly, $|a_i|\leq \left (\sup_{i\geq 2}
|a_i|^{1/(i-1)}\right)^{i-1}$. Moreover, $R_f=(\limsup
|a_i|^{1/i})^{-1}\geq 1/a$.
That $f: D_{1/a}(0)\to D_{1/a}(0)$ is bijective follows from the
second statement in Proposition \ref{proposition one-to-one}.
\end{proof}

\begin{remark}
Proposition \ref{proposition one-to-one} and Lemma \ref{lemma f
one-to-one} also hold when $K$ is not algebraically closed with
the modification that the mappings $h:D_{r}(0)\to D_{|c_1|r}(0)$
and $f:D_{1/a}(0)\to D_{1/a}(0)$ are one-to-one but not
necessarily surjective; the analogous statement is also true for
the closed discs.
\end{remark}

\begin{remark}\label{remark Lemma bijective}
Note that the disc $D_{1/a}(0)$ in Lemma \ref{lemma  f one-to-one}
may be irrational. Let $K$ be a field such that $|\widehat
{K}^*|=\{\epsilon ^r:r\in\mathbb{Q}\}$ for some $0<\epsilon <1$
(as in Example \ref{example laurent series}).  Let $\beta $ be an
irrational number and let $p_n/q_n$ be the $n$-th convergent of
the continued fraction expansion of $\beta$. Let the sequence
$\{a_i\in K\}_{i\geq 2}$ satisfy
\[
|a_i|= \left \{
\begin{array}{ll}
\left ( 1/\epsilon \right )^{p_n}, & \textrm{if \quad $i-1=q_n$ and $p_n/q_n<\beta $},\\
0, & \textrm{otherwise}.
\end{array}\right.
\]
Then,
\[
\sup |a_i|^{1/(i-1)}=\left ( 1/\epsilon \right )^{\beta}\notin
|\widehat{K}|.
\]
On the other hand, if the sequence $\{a_i\}$ is such that $\max_{i\geq 2}|a_i|^{1/(i-1)}$ exists, then
$a=\max_{i\geq 2}|a_i|^{1/(i-1)}\in |\widehat{K}|$ for any $K$.
This is always the case for polynomials.
\end{remark}

\section{Divergence}

As proven in Section \ref{section convergence}, power series of
the form $f(x)=\lambda x+ O(x^2)$, with monomials of
degree divisible by some nonnegative integer power of $p$ are
analytically linearizable at the origin. In the paper
\cite{Lindahl:2004}, it was proven that quadratic polynomials,
$f(x)=\lambda x+a_2x^2$, with $|1-\lambda|<1$, are not
analytically linearizable at the origin. We will prove another
result in this direction. Polynomials of the form $f(x)=\lambda x
+a_{p+1}x^{p+1}$, with $|1-\lambda|<1$, are not analytically
linearizable in characteristic $p>0$.

The key result is Lemma \ref{lemma absolute value b_jp+1} below.
In that lemma we obtain the exact modulus of a subsequence of
coefficients of the conjugacy function $g$. It turns out that this
subsequence contains sufficiently many small divisor terms to
yield divergence.
\begin{lemma}\label{lemma first nonzero terms of the conjugacy}
Let $K$ be a complete valued field and let $f$  be a power series
of the form
\[
f(x)=\lambda x +\sum_{i\geq i_0}a_ix^{i}\in K[[x]],
\]
where $\lambda \neq 0$ but not a root of unity, $i_0\geq 2$ is an
integer, and $a_{i_0}\neq 0$. Then, the associated formal
conjugacy function is of the form
\[
g(x)=x+\sum_{k\geq i_0}b_kx^k,
\]
where $b_{i_0}=a_{i_0}/\lambda (1-\lambda ^{i_0-1})$.
\end{lemma}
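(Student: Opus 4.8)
The plan is to argue directly from the recurrence relation (\ref{bk-equation}), exploiting that $\lambda$ is not a root of unity so that the denominators $\lambda(1-\lambda^{k-1})$ are nonzero for every $k\geq 2$. Write $f(x)=\sum_{i\geq 1}a_ix^i$ with $a_1=\lambda$, $a_i=0$ for $2\leq i<i_0$, and $a_{i_0}\neq 0$, and set $g(x)=x+\sum_{k\geq 2}b_kx^k$, where $b_1=1$ is built into the normalization $g'(0)=1$. The key preliminary observation is that the inner sum in (\ref{bk-equation}), taken over the nonnegative integer solutions of (\ref{index-equations}), is exactly the coefficient of $x^k$ in $f(x)^l$. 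In particular, for $l=1$ the constraints $\sum_j\alpha_j=1$ and $\sum_j j\alpha_j=k$ force $\alpha_k=1$ and all other $\alpha_j=0$, so the $l=1$ term contributes precisely $b_1a_k=a_k$.

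First I would prove by induction on $k$ that $b_k=0$ for all $k$ with $2\leq k\leq i_0-1$. Assume $b_j=0$ for every $j$ with $2\leq j\leq k-1$ (a vacuous hypothesis when $k=2$). In the sum $\sum_{l=1}^{k-1}b_l(\cdots)$ of (\ref{bk-equation}), each term with $l\geq 2$ vanishes because $b_l=0$ by the induction hypothesis, while the surviving $l=1$ term equals $a_k=0$ since $k<i_0$. Hence $b_k=0$, and the induction closes.

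It then remains to read off $b_{i_0}$. Applying (\ref{bk-equation}) with $k=i_0$, the same collapse occurs: every term with $2\leq l\leq i_0-1$ vanishes by the previous step, leaving only the $l=1$ contribution $b_1a_{i_0}=a_{i_0}$. Dividing by $\lambda(1-\lambda^{i_0-1})$ gives $b_{i_0}=a_{i_0}/\lambda(1-\lambda^{i_0-1})$, as asserted.

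I do not expect a genuine obstacle here; the argument is essentially a bookkeeping induction. The only point that warrants care is the identification of the multinomial inner sum with the coefficient of $x^k$ in $f(x)^l$, together with the observation that at each stage exactly one index, $l=1$, survives, so that throughout the range $k\leq i_0$ the full recurrence collapses to $b_k=a_k/\lambda(1-\lambda^{k-1})$. The nonvanishing of the denominators, guaranteed by $\lambda$ not being a root of unity, is what makes each $b_k$ well defined.
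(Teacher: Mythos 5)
Your proof is correct and is essentially the paper's argument: both reduce to identifying the coefficient of $x^k$ in $g\circ f=\lambda g$ for $k\le i_0$, using that $1-\lambda^{k-1}\neq 0$. The only cosmetic difference is that you run everything through the recurrence (\ref{bk-equation}) with an induction on $k$, whereas the paper truncates the functional equation modulo $x^{i_0}$ to get $b_k=0$ for $1<k<i_0$ in one step and then invokes (\ref{bk-equation}) only for $k=i_0$; the computations are the same.
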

\begin{proof}
By definition, the formal conjugacy $g$ is of the form
$g(x)=x+\sum_{k\geq 2}b_kx^k$. The left hand side of the
Schr\"{o}der functional equation (\ref{schroder functional
equation}) is of the form
\[
g\circ f(x)= \sum_{k=1}^{i_0-1}b_k(\lambda x +
a_{i_0}x^{i_0}+\dots)^k
+O(x^{i_0})=\sum_{k=1}^{i_0-1}b_k(\lambda
x)^k+O(x^{i_0}).
\]
For the right hand side we have
\[
\lambda g(x)=\lambda \sum_{k=1}^{i_0-1}b_k x^k
+O(x^{i_0}).
\]
Recall that $\lambda\neq 0$ is not a root of unity. Identification
term by term yields that $b_k=0$ for $1<k<i_0$. Consequently, for
$k=i_0$, the recursion formula (\ref{bk-equation}) yields
\[
b_{i_0}=b_1a_{i_0}/\lambda (1-\lambda ^{i_0-1}).
\]
But by definition, $b_1=1$. This completes the proof of the lemma.
\end{proof}

\begin{lemma}\label{lemma b_jp+1}
Let char $K=p>0$. Let $f(x)=\lambda x+a_{p+1}x^{p+1}\in
K[x]$, with $|\lambda |=1$ but not a root of unity. Then, the
formal solution $g$ of the SFE
(\ref{schroder functional equation}) has coefficients $b_k$ of the
form
\begin{equation}\label{equation b_jp+1}
b_{jp+1}=\frac{1}{\lambda (1-\lambda ^{jp})}\left [ \sum_{i=\lceil
\frac{j-1}{p+1} \rceil
}^{j-1}b_{ip+1}\binom{ip+1}{j-i}\lambda^{ip+1-(j-i)} a_{p+1}^{j-i}
\right ],
\end{equation}
for all integers $j\geq 1$, $b_1=1$, and $b_k=0$ otherwise.
\end{lemma}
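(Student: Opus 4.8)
The plan is to work directly from the Schr\"oder functional equation (\ref{schroder functional equation}) rather than from the general multinomial recursion (\ref{bk-equation}), since for the two-term polynomial $f(x)=\lambda x+a_{p+1}x^{p+1}$ the composition $g\circ f$ expands cleanly by the binomial theorem. Writing $g(x)=\sum_{k\geq 1}b_kx^k$ with $b_1=1$, I would first record
\[
(\lambda x+a_{p+1}x^{p+1})^k=\sum_{l=0}^{k}\binom{k}{l}\lambda^{k-l}a_{p+1}^{l}x^{k+pl},
\]
so that $g\circ f(x)=\sum_{k\geq 1}\sum_{l=0}^{k}b_k\binom{k}{l}\lambda^{k-l}a_{p+1}^{l}x^{k+pl}$. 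Collecting the coefficient of $x^n$ (the exponent condition $k+pl=n$ forces $k=n-pl$) and comparing with the coefficient $\lambda b_n$ of $x^n$ in $\lambda g(x)$ gives, for each $n\geq 2$,
\[
\lambda b_n=\sum_{l\geq 0}b_{n-pl}\binom{n-pl}{l}\lambda^{n-pl-l}a_{p+1}^{l},
\]
the sum running over $l$ with $n-pl\geq 1$; terms with $l>n-pl$ drop out since the binomial coefficient then vanishes.

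Next I would separate the $l=0$ term, which equals $\lambda^{n}b_n$, and move it to the left. Because $\lambda\neq 0$ is not a root of unity, $\lambda(1-\lambda^{n-1})\neq 0$ for every $n\geq 2$, so I may divide to obtain the working recursion
\[
b_n=\frac{1}{\lambda(1-\lambda^{n-1})}\sum_{l\geq 1}b_{n-pl}\binom{n-pl}{l}\lambda^{n-pl-l}a_{p+1}^{l}.
\]
The base case $b_{p+1}=a_{p+1}/\lambda(1-\lambda^{p})$ of Lemma \ref{lemma first nonzero terms of the conjugacy} is recovered here and agrees with (\ref{equation b_jp+1}) for $j=1$.

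The third step is to show, by strong induction on $n$, that $b_n=0$ whenever $n\not\equiv 1\pmod p$. The cases $2\leq n\leq p$ follow from Lemma \ref{lemma first nonzero terms of the conjugacy} (here $i_0=p+1$). For the inductive step I would observe that every index on the right of the recursion is $n-pl$ with $l\geq 1$, hence strictly smaller than $n$ and congruent to $n$ modulo $p$; if $n\not\equiv 1\pmod p$ then each $b_{n-pl}$ vanishes by the inductive hypothesis, forcing $b_n=0$. This pins the support of $g$ to the exponents $n=jp+1$.

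Finally, I would specialize the recursion to $n=jp+1$ and reindex by $i=j-l$, so that $n-pl=ip+1$, $\binom{n-pl}{l}=\binom{ip+1}{j-i}$, the power of $\lambda$ becomes $\lambda^{ip+1-(j-i)}$, and $a_{p+1}^{l}=a_{p+1}^{j-i}$, while the denominator is $\lambda(1-\lambda^{jp})$ --- precisely the shape of (\ref{equation b_jp+1}). The one genuinely fiddly point, and the step I expect to cost the most care, is matching the summation range: as $l$ runs over $1\leq l\leq\lfloor n/(p+1)\rfloor$ the new index descends to $i=j-\lfloor(jp+1)/(p+1)\rfloor$, so I must verify the identity $\lfloor(jp+1)/(p+1)\rfloor=j-\lceil(j-1)/(p+1)\rceil$. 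This follows from writing $jp+1=j(p+1)-(j-1)$ and using $\lfloor -x\rfloor=-\lceil x\rceil$, which converts the lower limit into $\lceil(j-1)/(p+1)\rceil$ as stated. Combining the three ingredients --- the recursion, the vanishing off $1\bmod p$, and the reindexing --- yields (\ref{equation b_jp+1}) for all $j\geq 1$.
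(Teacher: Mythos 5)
Your proof is correct and follows essentially the same route as the paper: expand $g\circ f$ via the binomial theorem for the two-term $f$, identify coefficients of $x^n$ against $\lambda g(x)$, and use induction to kill the coefficients with $n\not\equiv 1\pmod p$ before reindexing. The only difference is organizational --- you derive the universal recursion for all $n$ first and then prove the vanishing, whereas the paper interleaves the two in a single block-by-block induction --- and your handling of the lower summation limit via $\lfloor(jp+1)/(p+1)\rfloor=j-\lceil(j-1)/(p+1)\rceil$ matches the paper's condition $n-i\leq ip+1$.
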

\begin{proof}
First note that the case $k\leq p+1$ follows by
Lemma \ref{lemma first nonzero terms of the conjugacy}.

Now assume that the lemma holds for all $k\leq (n-1)p+1$ for some
$n\geq 2 $. In particular, this means that $b_k=0$ for all
$(i-1)p+1<k<ip+1$ where $1\leq i\leq n-1$. We now consider the
case $(n-1)p+1<k\leq np+1$.
We have regarding the left hand side of the SFE
\[
g\circ f(x)=\sum_{k=1}^{np+1}b_k(\lambda x + a_{p+1}x^{p+1})^k +
O(x^{np+2}),
\]
and by hypothesis,

\begin{equation}\label{equation schroder LHS np+1}
 g\circ f(x)=\sum_{i=0}^{n-1}b_{ip+1}(\lambda x
+ a_{p+1}x^{p+1})^{ip+1}+ \sum_{k=(n-1)p+2}^{np+1}b_k(\lambda
x+a_{p+1})^{k}+ O(x^{np+2}).
\end{equation}
For all integers $i \geq 0$, we have the binomial expansion
\[
(\lambda x + a_{p+1}x^{p+1})^{ip+1}=\sum_{\ell
=0}^{ip+1}\binom{ip+1}{\ell}(\lambda
x)^{ip+1-\ell}(a_{p+1}x^{p+1})^{\ell}.
\]
Consequently, with $\delta=\min\{ip+1,n-i\}$. 
\begin{equation}\label{equation scroder binom np+1 1}
(\lambda x+
a_{p+1}x^{p+1})^{ip+1}=\sum_{\ell=0}^{\delta}\binom{ip+1}{\ell}
\lambda ^{ip+1-\ell} a_{p+1}^{\ell}x^{(i+\ell)p+1}
+O(x^{np+2}).
\end{equation}
Also note that for $k\geq (n-1)p + 2$ we have
\begin{equation}\label{equation scroder binom np+1 2}
(\lambda x + a_{p+1}x^{p+1})^k=(\lambda x)^k +
O(x^{np+2}).
\end{equation}
Combining (\ref{equation schroder LHS np+1}), (\ref{equation
scroder binom np+1 1}), and (\ref{equation scroder binom np+1 2})
we obtain that $g\circ f(x)$ is of the form
\[
\sum_{i=0}^{n-1}b_{ip+1}\sum_{\ell=0}^{\delta}\binom{ip+1}{\ell}
\lambda ^{ip+1-\ell}
a_{p+1}^{\ell}x^{(i+\ell)p+1}+\sum_{k=(n-1)p+2}^{np+1}b_k(\lambda
x)^{k} + O(x^{np+2}).
\]
The right hand side of the SFE is of the form
\[
\lambda g(x)=\lambda\sum_{k=1}^{np+1}b_kx^k +
O(x^{np+2}).
\]
Note that (\ref{equation scroder binom np+1 1}) contains no powers
of $x$ in the closed interval $[(n-1)p+2,np]$. Consequently, for
powers of $x$ in this interval the SFE gives
\[
\sum_{k=(n-1)p+2}^{np}b_k(\lambda
x)^{k}=\lambda\sum_{k=(n-1)p+2}^{np}b_kx^k,
\]
and by identification term by term
\[
b_k=0, \quad\text{for all } (n-1)p + 1< k < np+1.
\]
In the remaining case the $x^{np+1}$-term in (\ref{equation
scroder binom np+1 1}) (if it exists) occur for $\ell=n-i$. Such
an $\ell$ exits if and only if $n-i\leq ip+1$, that is if $i\geq
\lceil \frac{n-1}{p+1}\rceil $. Hence, the equation for the
$x^{np+1}$-term yields
\[
b_{np+1}=\frac{1}{\lambda (1-\lambda ^{np})}\left [ \sum_{i=\lceil
\frac{n-1}{p+1}\rceil
}^{n-1}b_{ip+1}\binom{ip+1}{n-i}\lambda^{ip+1-(n-i)} a_{p+1}^{n-i}
\right ],
\]
as required.
\end{proof}

\begin{lemma}\label{lemma-summands b_{jp+1}}
For each summand in the recursion formula (\ref{equation b_jp+1}),
the total power of $a_{p+1}$ is $j$, i.e each $b_{jp+1}$ is of the
form
\begin{equation}\label{power of a_{p+1}}
b_{jp+1}=c_{jp+1}a_{p+1}^{j},
\end{equation}
where where $c_{jp+1}$ is a sum of products with factors of the
form
\begin{equation}\label{factors}
\frac{1}{\lambda(1-\lambda^{\alpha})} \binom{a}{b}\lambda^{\beta}.
\end{equation}
\end{lemma}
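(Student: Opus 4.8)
The plan is to argue by strong induction on $j$, feeding the inductive hypothesis directly into the recursion (\ref{equation b_jp+1}) established in Lemma \ref{lemma b_jp+1}. The inductive claim I would carry is slightly stronger than a bare assertion about the exponent of $a_{p+1}$: I would posit that $b_{ip+1}=c_{ip+1}a_{p+1}^{i}$, where $c_{ip+1}$ is a sum of products of factors of the shape (\ref{factors}), and \emph{crucially} that $c_{ip+1}$ involves only $\lambda$ and integer binomial coefficients, so that it carries no hidden power of $a_{p+1}$. This last point is exactly what makes the exponent bookkeeping clean, and so it must be built into the hypothesis rather than deduced afterwards.

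For the base case $j=1$, Lemma \ref{lemma first nonzero terms of the conjugacy} applied with $i_0=p+1$ gives $b_{p+1}=a_{p+1}/\lambda(1-\lambda^{p})$, so that $c_{p+1}=1/\lambda(1-\lambda^{p})$, which is a single factor of the form (\ref{factors}) with $\alpha=p$, trivial binomial, and $\beta=0$. This anchors the induction.

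For the inductive step I would substitute $b_{ip+1}=c_{ip+1}a_{p+1}^{i}$ into (\ref{equation b_jp+1}). Each summand then carries $a_{p+1}^{i}$ from $b_{ip+1}$ and $a_{p+1}^{j-i}$ from the explicit monomial factor, for a total exponent $i+(j-i)=j$ that is \emph{independent of the summation index}. Hence $a_{p+1}^{j}$ factors out of the entire sum, giving
\[
c_{jp+1}=\frac{1}{\lambda(1-\lambda^{jp})}\sum_{i=\lceil\frac{j-1}{p+1}\rceil}^{j-1} c_{ip+1}\binom{ip+1}{j-i}\lambda^{ip+1-(j-i)},
\]
which establishes (\ref{power of a_{p+1}}). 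It then remains to recognise the right-hand side as a sum of products of factors of the form (\ref{factors}). The key observation is that the multiplier attached at this level of the recursion, namely $\frac{1}{\lambda(1-\lambda^{jp})}\binom{ip+1}{j-i}\lambda^{ip+1-(j-i)}$, is itself exactly one factor of the form (\ref{factors}), with $\alpha=jp$, binomial $\binom{ip+1}{j-i}$, and $\beta=ip+1-(j-i)$. Multiplying the inductively supplied sum of products $c_{ip+1}$ by this single factor, and then summing over $i$, again yields a sum of products of such factors (the number of factors increasing by one per level) and introduces no dependence on $a_{p+1}$, closing the induction.

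I expect no serious obstacle here; the only thing to watch is the matching of the prescribed factor shape. One must verify that the per-level multiplier collapses into a single instance of (\ref{factors}) rather than forcing a more general factor type, and that the strengthened hypothesis — independence of $c_{ip+1}$ from $a_{p+1}$ — is genuinely preserved. It is precisely this independence that guarantees the exponent of $a_{p+1}$ is accounted for entirely by the explicit monomials and therefore equals $j$ in every summand.
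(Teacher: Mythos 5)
Your proposal is correct and follows essentially the same route as the paper: induction on $j$, substituting $b_{ip+1}=c_{ip+1}a_{p+1}^{i}$ into the recursion (\ref{equation b_jp+1}) and observing that each summand carries the total power $i+(j-i)=j$ of $a_{p+1}$. Your version is somewhat more careful than the paper's (which only records the exponent bookkeeping), in that you explicitly verify that the per-level multiplier is a single factor of the form (\ref{factors}) and that $c_{ip+1}$ carries no hidden dependence on $a_{p+1}$, but this is a refinement of the same argument rather than a different one.
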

\begin{proof}
First note that in view of (\ref{equation
 b_jp+1}) we have for $j=1$ that the coefficient
$b_{p+1}=a_{p+1}/(\lambda(1-\lambda ^p))$. Now assume that
(\ref{power of a_{p+1}}) holds for $j\leq n$. For $j=n+1$ and
$1\leq i\leq n$, we then have regarding the right hand side of
(\ref{equation b_jp+1}) that
\begin{equation*}
 b_{ip+1}a_{p+1}^{n+1-i}=c_{ip+1}a_{p+1}^{i}a_{p+1}^{n+1-i}=c_{ip+1}a_{p+1}^{n+1},
\end{equation*}
as required.
\end{proof}

\begin{lemma}\label{lemma absolute value b_jp+1}
Let $f(x)=\lambda x+a_{p+1}x^{p+1}$, $|\lambda |=1$ but not a root
of unity. Suppose $|1-\lambda |<1$. Then the formal solution $g$
of the SFE (\ref{schroder functional equation}) has coefficients
$b_k$ of absolute value
\begin{equation}\label{equation absolute value b_jp+1}
|b_{jp+1}|=\frac{|a_{p+1}|^j}{\prod_{i=1}^{j}|1-\lambda ^{ip}|},
\end{equation}
for all integers $j\geq 1$, $b_1=1$, and $b_k=0$ otherwise.

\end{lemma}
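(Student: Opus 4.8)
The plan is to establish (\ref{equation absolute value b_jp+1}) by strong induction on $j$, feeding the recursion (\ref{equation b_jp+1}) of Lemma \ref{lemma b_jp+1} into the ultrametric principle that the modulus of a sum equals the modulus of a strictly dominating summand. The base case $j=1$ is read off directly from Lemma \ref{lemma b_jp+1}: since $b_{p+1}=a_{p+1}/(\lambda(1-\lambda^p))$ and $|\lambda|=1$, one has $|b_{p+1}|=|a_{p+1}|/|1-\lambda^p|$, which matches the claimed formula. For the inductive step I would assume (\ref{equation absolute value b_jp+1}) for all indices $ip+1$ with $i<j$, and analyze the bracketed sum in (\ref{equation b_jp+1}) term by term.

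Before comparing terms I record the arithmetic input. Because $|1-\lambda|<1$ we have $m(\lambda)=1$, so Lemma \ref{lemma distance char p} gives $|1-\lambda^n|<1$ for every $n\geq 1$; in particular $|1-\lambda^{\ell p}|<1$ for all $\ell\geq 1$. I would then single out the top summand $i=j-1$ of (\ref{equation b_jp+1}). Its binomial factor is $\binom{(j-1)p+1}{1}=(j-1)p+1\equiv 1\pmod p$, hence nonzero in $K$ and of modulus $1$; together with $|\lambda|=1$ and the induction hypothesis applied to $b_{(j-1)p+1}$, this summand has modulus $|a_{p+1}|^{j}\big/\prod_{\ell=1}^{j-1}|1-\lambda^{\ell p}|$.

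The heart of the argument is to show that every other summand is strictly smaller. For $i<j-1$ the $i$-th summand has modulus $|a_{p+1}|^{j}\,|\binom{ip+1}{j-i}|\big/\prod_{\ell=1}^{i}|1-\lambda^{\ell p}|$, again by the induction hypothesis; dividing by the modulus of the $i=j-1$ summand leaves the ratio $|\binom{ip+1}{j-i}|\prod_{\ell=i+1}^{j-1}|1-\lambda^{\ell p}|$. Since binomial coefficients are integers and so have modulus at most $1$, and since the remaining product is non-empty (as $i<j-1$) with each factor $<1$, this ratio is strictly less than $1$. Thus the $i=j-1$ summand dominates the bracket uniquely, so by ultrametricity the bracket has modulus $|a_{p+1}|^{j}\big/\prod_{\ell=1}^{j-1}|1-\lambda^{\ell p}|$. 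Multiplying by $1/|1-\lambda^{jp}|$, coming from the prefactor $1/(\lambda(1-\lambda^{jp}))$, yields exactly (\ref{equation absolute value b_jp+1}) for index $j$, closing the induction.

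The only genuine obstacle is the strict-dominance estimate, and it hinges on two characteristic-$p$ phenomena pulling in the same direction: the distinguished binomial coefficient $(j-1)p+1$ is a unit while all other binomial factors are merely non-expanding, and every small divisor $|1-\lambda^{\ell p}|$ is truly $<1$. The hard part will be making this comparison watertight; in doing so I would verify that the dominant index $i=j-1$ always lies inside the summation range $\lceil (j-1)/(p+1)\rceil\le i\le j-1$ (it is the upper endpoint) and that no degenerate vanishing of $b_{ip+1}$ spoils the comparison, which is automatic since the induction hypothesis forces each $|b_{ip+1}|$ to be strictly positive.
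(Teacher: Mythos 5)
Your proposal is correct and follows essentially the same route as the paper: induction on $j$ using the recursion of Lemma \ref{lemma b_jp+1}, identifying the $i=j-1$ summand as strictly dominant because its binomial factor $\binom{(j-1)p+1}{1}$ is a unit while the remaining summands pick up extra factors $|1-\lambda^{\ell p}|<1$, and then invoking ultrametricity. In fact you spell out the dominance comparison in more detail than the paper, which merely asserts it "by induction over $j$."
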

\begin{proof}
By Lemma \ref{lemma-summands b_{jp+1}}, the total
power of $a_{p+1}$ in the products $b_{ip+1}a_{p+1}^{j-i}$ of the
right hand side of (\ref{equation b_jp+1}) is $j$. Also recall
that $|\lambda |=1$. Furthermore
\[
\left |\binom{ip+1}{1} \right |=1,
\]
for all nonnegative integers $i$. Now, since $|1-\lambda |<1$, it
follows by induction over $j$ that the $b_{(j-1)p+1}$-term in
(\ref{equation b_jp+1}) is strictly greater than all the others
and by ultrametricity  we obtain
\[
|b_{jp+1}|=\left |\frac{1}{\lambda (1-\lambda
^{jp})}b_{(j-1)p+1}\binom{(j-1)p+1}{1}\lambda^{(j-1)p}
a_{p+1}\right | =\frac{|a_{p+1}|^j}{\prod_{i=1}^{j}|1-\lambda
^{ip}|}.
\]
This completes the proof of Lemma \ref{lemma absolute value
b_jp+1}.
\end{proof}

\begin{lemma}\label{lemma product small divisors}
Let char $K=p>0$ and $\lambda\in K$,
$|\lambda|=1$, but not a root of unity. Suppose $|1-\lambda |<1$,
then
\begin{equation}\label{product small divisors}
\prod_{i=1}^{p^{N-1}}|1-\lambda ^{ip}|=|1-\lambda
|^{p^N(\frac{p-1}{p}(N-1)+1)},
\end{equation}
for all integers $N\geq 1$.
\end{lemma}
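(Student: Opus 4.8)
The plan is to reduce the product to $|1-\lambda|$ raised to a single integer exponent, and then to evaluate that exponent by a counting argument that sorts the indices $i$ according to their divisibility by powers of $p$.

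First I would observe that the hypothesis $|1-\lambda|<1$ means that $n=1$ already satisfies $|1-\lambda^{n}|<1$, so the integer $m=m(\lambda)$ of (\ref{definition m}) equals $1$. Lemma \ref{lemma distance char p} then applies with $m=1$: writing $ip=ap^{j}$ with $p\nmid a$, it gives $|1-\lambda^{ip}|=|1-\lambda^{m}|^{p^{j}}=|1-\lambda|^{p^{j}}$. Since the exponent of the highest power of $p$ dividing $ip$ is one more than that of $i$, denoting the latter by $v_{p}(i)$ we have $j=v_{p}(i)+1$, so each factor equals $|1-\lambda|^{p^{v_{p}(i)+1}}$. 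Taking the product over $i$ from $1$ to $p^{N-1}$ converts it into
\[
\prod_{i=1}^{p^{N-1}}|1-\lambda^{ip}|=|1-\lambda|^{E},\qquad E=\sum_{i=1}^{p^{N-1}}p^{\,v_{p}(i)+1}=p\sum_{i=1}^{p^{N-1}}p^{\,v_{p}(i)}.
\]

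Thus the task becomes purely arithmetic: evaluate $S=\sum_{i=1}^{p^{N-1}}p^{v_{p}(i)}$. To do this I would group the indices by the value of $v_{p}(i)$. The number of $i\in\{1,\dots,p^{N-1}\}$ with $v_{p}(i)=k$ is $\lfloor p^{N-1}/p^{k}\rfloor-\lfloor p^{N-1}/p^{k+1}\rfloor$, which equals $(p-1)p^{N-2-k}$ for $0\le k\le N-2$ and equals $1$ for $k=N-1$. Each level $k$ with $0\le k\le N-2$ therefore contributes $(p-1)p^{N-2-k}\cdot p^{k}=(p-1)p^{N-2}$, independently of $k$, while the top level $k=N-1$ (where only $i=p^{N-1}$ occurs) contributes $p^{N-1}$. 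Summing over the $N-1$ lower levels and adding the top level gives $S=(N-1)(p-1)p^{N-2}+p^{N-1}$, whence $E=pS=(N-1)(p-1)p^{N-1}+p^{N}=p^{N}\bigl(\tfrac{p-1}{p}(N-1)+1\bigr)$, exactly the claimed exponent.

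I expect the only delicate point to be this counting step, namely verifying that the contribution is the same constant $(p-1)p^{N-2}$ at every level $0\le k\le N-2$ so that the sum collapses to $(N-1)(p-1)p^{N-2}$, together with the separate treatment of the boundary level $k=N-1$ (whose count is $1$ rather than the non-integer value the general formula would suggest). Everything else is a direct application of Lemma \ref{lemma distance char p} and the multiplicativity of $|\cdot|$. I would also record the base case $N=1$, where the product is the single factor $|1-\lambda^{p}|=|1-\lambda|^{p}$, in agreement with the formula, as a sanity check.
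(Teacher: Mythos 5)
Your proof is correct and follows essentially the same route as the paper's: reduce each factor via Lemma \ref{lemma distance char p} with $m=1$ to $|1-\lambda|^{p^{j}}$ and then count the indices by their exact $p$-adic valuation, treating the single top index separately. The only difference is that you index the count by $v_p(i)$ over $\{1,\dots,p^{N-1}\}$ while the paper counts the multiples $ip$ in $\{1,\dots,p^{N}\}$ divisible by $p^{n}$ but not $p^{n+1}$, which is the same computation after a shift.
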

\begin{proof}
First note that for each $n=1,\dots,N-1$,  the
number of elements in $\{1,2,\dots,p^N\}$ that are divisible by
$p^n$ but not by $p^{n+1}$, are given by the number $p^N/p^n-p^N/p^{n+1}$.
Since $|1-\lambda |<1$ we can apply Lemma \ref{lemma distance char
p} in the case $m=1$ to obtain
\[
\prod_{i=1}^{p^{N-1}}|1-\lambda ^{ip}|=|1-\lambda
|^{p^N+\sum_{n=1}^{N-1}(\frac{p^N}{p^n}-\frac{p^N}{p^{n+1}})p^n}=|1-\lambda
|^{p^N(1+\frac{p-1}{p}(N-1))},
\]
as required.
\end{proof}

\begin{theorem}\label{theorem divergence p+1}
Let char $K=p>0$ and $f(x)=\lambda x + a_{p+1}x^{p+1}\in
K[x]$. Suppose $|\lambda |=1$ and $|1-\lambda |<1$. Then
$f$ is not analytically linearizable at $x=0$.
\end{theorem}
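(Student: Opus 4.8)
The plan is to show that the formal conjugacy $g(x)=x+\sum_k b_k x^k$ has radius of convergence zero, which by (\ref{radius of convergence}) amounts to exhibiting a subsequence $k_N$ of indices along which $|b_{k_N}|^{1/k_N}\to\infty$. Since the case $a_{p+1}=0$ makes $f$ linear and trivially linearizable, I take $a_{p+1}\neq 0$, so that Lemmas \ref{lemma absolute value b_jp+1} and \ref{lemma product small divisors} apply.

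First I would recall that the only possibly nonzero coefficients are $b_1=1$ and those of the form $b_{jp+1}$, $j\geq 1$, whose absolute values are given explicitly by (\ref{equation absolute value b_jp+1}). The idea is to single out the indices along which the accumulated small divisors in the denominator are largest relative to $j$. I would therefore take $j=p^{N-1}$, that is $k_N=p^N+1$, precisely because then the denominator $\prod_{i=1}^{p^{N-1}}|1-\lambda^{ip}|$ is the product evaluated in Lemma \ref{lemma product small divisors}.

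Combining (\ref{equation absolute value b_jp+1}) with (\ref{product small divisors}) then yields the closed form
\[
|b_{p^N+1}|=\frac{|a_{p+1}|^{p^{N-1}}}{|1-\lambda|^{p^N\left(\frac{p-1}{p}(N-1)+1\right)}}.
\]
Taking the $(p^N+1)$-th root and letting $N\to\infty$, the exponent of $|a_{p+1}|$ tends to $1/p$, so that factor remains bounded and bounded away from zero, while the exponent of $|1-\lambda|$, after dividing by $p^N+1$, is asymptotically $\frac{p-1}{p}(N-1)+1$, which grows linearly in $N$. Since $|1-\lambda|<1$ by hypothesis, this forces $|b_{p^N+1}|^{1/(p^N+1)}\to\infty$, whence $\limsup_k|b_k|^{1/k}=\infty$ and, by (\ref{radius of convergence}), the radius of convergence of $g$ is zero; thus $g$ diverges and $f$ is not analytically linearizable.

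The substance of the argument is carried by the two preceding lemmas; the only genuine choice in the final assembly is the subsequence $j=p^{N-1}$, which is what converts the accumulation of small divisors—each ``bad'' index $ip$ contributing a factor $|1-\lambda|$ raised to a power governed by the $p$-adic valuation of $ip$—into an exponent that survives the $k$-th root. I expect the main obstacle, already dispatched by Lemma \ref{lemma product small divisors}, to be the accounting of how divisibility by increasing powers of $p$ inflates the denominator faster than linearly in $j$; granting that lemma, the remaining limit computation is routine.
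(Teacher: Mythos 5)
Your argument is correct and coincides with the paper's own proof: the same subsequence $j=p^{N-1}$ is chosen, Lemmas \ref{lemma absolute value b_jp+1} and \ref{lemma product small divisors} are combined in the same way, and the conclusion $\limsup_k |b_k|^{1/k}=\infty$ follows identically. Your explicit remark that $a_{p+1}\neq 0$ is needed (as in the statement of the theorem in the summary of results) is a harmless and reasonable addition.
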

\begin{proof}
Let $j=p^{N-1}$ for some integer $N\geq 1$. By Lemma \ref{lemma
absolute value b_jp+1},
\[
|b_{p^N+1}|=\frac{|a_{p+1}|^{p^{N-1}}}{\prod_{i=1}^{p^{N-1}}|1-\lambda
^{ip} |}.
\]
But in view of Lemma \ref{lemma product small divisors} this means
that
\[
|b_{p^N+1}|=\frac{|a_{p+1}|^{p^{N-1}}}{|1-\lambda
|^{p^N(1+\frac{p-1}{p}(N-1))}},
\]
and since $|1-\lambda |<1$,
\[
\lim_{N\to\infty} |b_{p^N+1}|^{1/(p^N+1)}=\infty.
\]
Consequently, $\limsup |b_k|^{1/k}=\infty$ so that the conjugacy
diverges.
\end{proof}

\vspace{1.5ex}\noindent Further investigation has led us to
believe that we have divergence also in the case $m>1$ in Theorem
\ref{theorem divergence p+1}, that is $m>1$ is the smallest
integer such that $|1-\lambda ^m|<1$. However, the proof seems to
be more complicated; we do not necessarily have a strictly
dominating term in the right hand side of (\ref{equation b_jp+1})
for all $j$, and consequently, (\ref{equation absolute value
b_jp+1}) may no longer be valid.

\begin{conjecture}[Generalization of Theorem \ref{theorem divergence
p+1}]\label{conjecture divergence p+1 m>1} Let char
$K=p>0$ and $f(x)=\lambda x + a_{p+1}x^{p+1}\in
K[x]$. Suppose $|\lambda |=1$ and that $m\geq 1$ is the
smallest integer such that $|1-\lambda ^m |<1$. Then $f$ is not
analytically linearizable at $x=0$.

\end{conjecture}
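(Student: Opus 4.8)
The plan is to prove divergence by producing a subsequence of coefficients $b_k$ of the conjugacy $g$ along which $|b_k|^{1/k}\to\infty$; by \eqref{radius of convergence} this forces $R_g=0$. As in Theorem \ref{theorem divergence p+1}, only the coefficients $b_{jp+1}$ are nonzero (Lemma \ref{lemma b_jp+1}), and by Lemma \ref{lemma-summands b_{jp+1}} each has the form $b_{jp+1}=c_{jp+1}a_{p+1}^{j}$, where $c_{jp+1}$ is a sum of products of factors $\frac{1}{\lambda(1-\lambda^{\alpha})}\binom{a}{b}\lambda^{\beta}$. I would organize these products as weighted paths: unfolding the recursion \eqref{equation b_jp+1} down to $b_1=1$, every contribution to $b_{jp+1}$ corresponds to a strictly decreasing sequence of indices $j=i_0>i_1>\dots>i_s=0$, and its absolute value is $|a_{p+1}|^{j}\prod_{t=1}^{s}|\binom{i_tp+1}{i_{t-1}-i_t}|\cdot|1-\lambda^{i_{t-1}p}|^{-1}$. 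Each binomial is an integer, hence of absolute value $0$ or $1$; and by Lemma \ref{lemma distance char p} (with $p\nmid m$, so $m\mid ip\iff m\mid i$) a denominator $|1-\lambda^{i_{t-1}p}|$ is $<1$ exactly when $m\mid i_{t-1}$.

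The first step is to locate the dominant term. Since binomial factors never exceed $1$ and every denominator factor $|1-\lambda^{ip}|^{-1}\geq 1$, a path is largest when it visits as many multiples of $m$ as possible while keeping all of its binomials prime to $p$. The single-step path $j\to j-1\to\dots\to 1\to 0$ visits \emph{every} index, and its binomials are $\binom{ip+1}{1}=ip+1\equiv 1\pmod p$, all of absolute value $1$; its absolute value is therefore $|a_{p+1}|^{j}\prod_{i=1}^{j}|1-\lambda^{ip}|^{-1}$, and this dominates (weakly) every other path, since any competitor visits a subset of $\{1,\dots,j\}$ and so accumulates a subproduct of the same factors $\geq 1$. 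Evaluating along the subsequence $j=mp^{N-1}$, the small divisors occur at $i=m\ell$ with $1\le\ell\le p^{N-1}$, where $|1-\lambda^{ip}|=|1-\lambda^{m}|^{p^{v_p(\ell)+1}}$; the counting argument of Lemma \ref{lemma product small divisors} then gives $\prod_{i=1}^{mp^{N-1}}|1-\lambda^{ip}|^{-1}=|1-\lambda^{m}|^{-p^{N}(\frac{p-1}{p}(N-1)+1)}$. The exponent of $|1-\lambda^m|^{-1}$ grows like $Np^{N}$ while $jp+1\sim mp^{N}$, so, as $|1-\lambda^m|<1$, the $(jp+1)$-th root of the maximal path value tends to $\infty$.

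This reduces the theorem to showing that $|b_{jp+1}|$ actually equals this maximal value along the subsequence, i.e.\ that the maximal-absolute-value terms in $c_{jp+1}$ do not cancel; this is the main obstacle, and it is exactly the phenomenon flagged after Theorem \ref{theorem divergence p+1}. Unlike the case $m=1$, the single-step path need not be the unique largest term: any path that visits all multiples of $m$ and whose steps are all prime to $p$ (in the sense of Lucas' theorem) ties it in absolute value. I would attack this by reducing modulo the maximal ideal. After dividing out the common absolute value, each maximal path reduces to a product of binomials $\pmod p$ times a unit assembled from the reductions $\overline{(1-\lambda^{ip})(1-\lambda^m)^{-p^{\gamma}}}$ and powers of $\overline{\lambda}$, which is a root of unity in $\Bbbk$ because $m=m(\lambda)$ exists, so $\overline\lambda\in\Gamma(\Bbbk)$. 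The cleanest route is to prove that for $j=mp^{N-1}$ the single-step path is the \emph{unique} maximal path: a competing path must skip some index $i$ with $m\nmid i$ by a step of size $\Delta\ge 2$, and Kummer's/Lucas' theorem applied to $\binom{i'p+1}{\Delta}\pmod p$ should force such a step to contribute a binomial divisible by $p$ (for instance, a direct jump between consecutive multiples of $m$ already dies unless $m\equiv 1\pmod p$). Proving this combinatorial uniqueness, or, failing that, the non-vanishing in $\Bbbk$ of the reduced sum of maximal paths, is where the real work lies; once it is in hand, ultrametricity yields $|b_{jp+1}|=|a_{p+1}|^{j}\prod_{i=1}^{j}|1-\lambda^{ip}|^{-1}$ exactly, and divergence follows as in Theorem \ref{theorem divergence p+1}.

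A structural observation that I expect to help organize the cancellation analysis is a renormalization intrinsic to this family. In characteristic $p$ one has $f(x)^{p}=(\lambda x+a_{p+1}x^{p+1})^{p}=\lambda^{p}x^{p}+a_{p+1}^{p}(x^{p})^{p+1}=F(x^{p})$, where $F(u)=\lambda^{p}u+a_{p+1}^{p}u^{p+1}$ lies in the same family with $(\lambda,a_{p+1})$ replaced by $(\lambda^{p},a_{p+1}^{p})$. Writing the conjugacy as $g(x)=x\,H(x^{p})$, the Schr\"oder equation \eqref{schroder functional equation} becomes $(\lambda+a_{p+1}u)\,H(F(u))=\lambda\,H(u)$. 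This self-similarity preserves $m$, since $m(\lambda^{p})=m(\lambda)$, and raises the base divisor to the $p$-th power, as $|1-(\lambda^{p})^{m}|=|1-\lambda^{m}|^{p}$; it therefore suggests an inductive bookkeeping of the maximal paths across the tower $f\mapsto F\mapsto\cdots$ that could isolate the non-cancelling contribution and circumvent the direct binomial combinatorics.
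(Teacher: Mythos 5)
This statement is stated in the paper as a \emph{conjecture}, not a theorem: the paper offers no proof of it, and the remark immediately preceding it explains precisely why the argument of Theorem \ref{theorem divergence p+1} does not extend to $m>1$ --- namely, that the recursion (\ref{equation b_jp+1}) need not have a strictly dominating term, so the exact formula (\ref{equation absolute value b_jp+1}) for $|b_{jp+1}|$ may fail. Your proposal runs into exactly this wall and does not get past it. The parts you do carry out are sound and essentially reproduce the paper's $m=1$ machinery: the path decomposition of $c_{jp+1}$ from Lemmas \ref{lemma b_jp+1} and \ref{lemma-summands b_{jp+1}}, the identification of the single-step path $j\to j-1\to\dots\to 0$ as a weakly dominant term of absolute value $|a_{p+1}|^{j}\prod_{i=1}^{j}|1-\lambda^{ip}|^{-1}$, and the growth estimate along $j=mp^{N-1}$ via the counting argument of Lemma \ref{lemma product small divisors}. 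But the step on which everything hinges --- that the maximal terms do not cancel, so that $|b_{jp+1}|$ actually attains the maximal path value --- is explicitly deferred (``is where the real work lies''). That is not a minor technicality: for $m\geq p+1$ there genuinely are competing maximal paths (e.g.\ a step of size $p$ has $\binom{i_tp+1}{p}\not\equiv 0\pmod p$ whenever $p\nmid i_t$, and it skips $p-1$ consecutive indices, which can all avoid multiples of $m$), so ultrametric equality cannot be invoked and a cancellation analysis in the residue field is unavoidable. Your proposed routes (Lucas/Kummer combinatorics to force uniqueness, reduction of the sum of maximal paths modulo the maximal ideal, or the renormalization $f(x)^p=F(x^p)$) are plausible starting points, but none of them is executed, so the proposal is a correct reduction of the conjecture to an open combinatorial claim rather than a proof.

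One small caution on a sub-claim you make in passing: the assertion that ``a direct jump between consecutive multiples of $m$ already dies unless $m\equiv 1\pmod p$'' is only the first base-$p$ digit of the Lucas computation; jumps of size $\Delta\equiv 0$ or $1\pmod p$ can also survive, subject to digit-domination conditions on the higher digits of $\Delta$ against those of $i_t$. Any serious attack on the uniqueness of the maximal path has to account for all such steps, not only steps between consecutive multiples of $m$.
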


\section{Estimates of linearization discs}\label{section convergence}




In this section we consider power series in the family
\begin{equation}
\mathcal{F}_{\lambda,a}^p(K)=\left\{\lambda
x +\sum_{p|i} a_ix^i\in K[[x]]:a=\sup_{i\geq 2}|a_i|^{1/(i-1)}\right\},
\end{equation}
as defined in Section \ref{sectiom est of Siegel summary}. Note
that each $f\in \mathcal{F}_{\lambda,a}^p(K)$ is
convergent on $D_{1/a}(0)$ and by Lemma \ref{lemma  f one-to-one}
$f:D_{1/a}(0)\to D_{1/a}(0)$ is one-to-one. We will prove in
Theorem \ref{theorem convergence divisible by p} that each each
$f\in \mathcal{F}_{\lambda,a}^p(K)$ is linearizable at
the fixed point at the origin. We also estimate the region of
convergence for the corresponding conjugacy function $g$, and its
inverse.

\begin{remark}
By the conjugacy relation $g\circ f\circ g^{-1}(x)=\lambda x $, $f$ must certainly be one-to-one 
on the linearization disc. Consequently, by Lemma \ref{lemma  f one-to-one}, the full conjugacy relation cannot hold on a disc greater than $\overline{D}_{1/a}(0)$. 
\end{remark}

We begin by proving the following, simple but important fact.
Given a power series $f\in \mathcal{F}_{\lambda,a}^p(K)$, the conjugacy
function $g$ only contains monomials of degree divisible by some
nonnegative integer power of $p$. More precisely, we have the
following Lemma.

\begin{lemma}\label{lemma form of conjugacy p|i}
Let char $K=p>0$ and let $f\in
\mathcal{F}_{\lambda,a}^p(K)$, where $|\lambda| =1$ but
not a root of unity. Then, the formal
conjugacy $g$ 
is of the form
\begin{equation}\label{conjugacy with powers divisible by p}
g(x)=x+\sum_{p\mid k}b_kx^k.
\end{equation}
\end{lemma}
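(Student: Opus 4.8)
The plan is to prove the claim by strong induction on the degree $k$, showing that $b_k=0$ whenever $k\geq 2$ and $p\nmid k$. The induction is organized around the Schröder functional equation written coefficientwise, i.e.\ the recurrence (\ref{bk-equation}), which for each $k\geq 2$ takes the form $\lambda(1-\lambda^{k-1})b_k=\sum_{l=1}^{k-1}b_l\,[x^k]\bigl(f(x)^l\bigr)$, where $[x^k]$ denotes the coefficient of $x^k$. Since $\lambda$ is not a root of unity, the factor $\lambda(1-\lambda^{k-1})$ is nonzero, so it suffices to show that the right-hand side vanishes when $p\nmid k$. The base of the induction is supplied by Lemma \ref{lemma first nonzero terms of the conjugacy}: the first nonzero nonlinear coefficient of $g$ occurs at the degree $i_0$ of the first nonlinear monomial of $f$, and $p\mid i_0$ by hypothesis, so all intermediate $b_l$ with $p\nmid l$ already vanish.

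The decisive ingredient is a freshman's-dream computation in characteristic $p$. Because every nonlinear monomial of $f$ has degree divisible by $p$, additivity of the $p$-th power in characteristic $p$ gives $f(x)^p=\lambda^p x^p+\sum_{p\mid i}a_i^p x^{ip}$, which is a power series in $x^p$. Consequently $f(x)^l=\bigl(f(x)^p\bigr)^{l/p}$ is a power series in $x^p$ for every $l$ divisible by $p$, so $[x^k]\bigl(f(x)^l\bigr)=0$ whenever $p\nmid k$.

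With this in hand I split the sum $\sum_{l=1}^{k-1}b_l[x^k]\bigl(f(x)^l\bigr)$ into three groups. For $l=1$ the contribution is $[x^k]f=a_k$, which is $0$ since $p\nmid k$ and $k\geq 2$. For $1<l<k$ with $p\nmid l$, the induction hypothesis gives $b_l=0$. For $l$ divisible by $p$, the freshman's-dream computation gives $[x^k]\bigl(f(x)^l\bigr)=0$. Hence every term vanishes, forcing $b_k=0$ and completing the induction; together with $b_1=1$ this yields the asserted form (\ref{conjugacy with powers divisible by p}).

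I expect the only real subtlety to be the bookkeeping that justifies rewriting (\ref{bk-equation}) as the stated coefficient identity and confirming which indices $l$ can actually contribute — in particular that $l$ ranges only up to $k-1$ after isolating the $b_k$-term coming from $(\lambda x)^k$ in $f(x)^k$. Once the three-way case split is in place, the characteristic-$p$ identity for $f(x)^p$ does all the work, and no delicate estimates are needed.
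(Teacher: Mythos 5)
Your proof is correct and follows essentially the same route as the paper: an induction on the coefficient index via the Schr\"{o}der recurrence, with the key characteristic-$p$ fact that $f(x)^l$ contains only exponents divisible by $p$ whenever $p\mid l$. The paper extracts this from the vanishing of $\binom{ip}{l}$ for $p\nmid l$ in the binomial expansion of $(\lambda x+f_p(x))^{ip}$, whereas you get it from the Frobenius identity $f(x)^p=\lambda^p x^p+\sum_{p\mid i}a_i^p x^{ip}$ --- the same observation in slightly cleaner clothing.
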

\begin{proof}
Let $f\in \mathcal{F}_{\lambda,a}^p(K)$ and let
$f_p(x)=\sum_{(i,p)>1}a_{i}x^{i}$. Let the conjugacy
$g(x)=x+\sum_{k=2}^{\infty}b_kx^k$. We will prove by induction
that if $g$ is the formal solution of the Schr\"{o}der functional
equation, then $b_k=0$ for all $k\geq 2$ such that $p\nmid k$.

By definition $b_1=1$ and by Lemma \ref{lemma first nonzero terms
of the conjugacy} $b_k=0$ for $1<k<p$. Assume that $b_k=0$ for all
$k>1$ such that $(j-1)p< k < jp$, where $1\leq j\leq N$. Let $h_N$
be the polynomial
\[
h_N(x)=\lambda x + f_p(x) + \sum_{i=1}^{N}b_{ip}(\lambda x +
f_p(x))^{ip} \mod x^{(N+1)p}.
\]
Note that the terms
\[
(\lambda x + f_p(x))^{ip}=\sum_{l=0}^{ip}\binom{ip}{l}(\lambda
x)^{l}(f_p(x))^{ip-l},
\]
do only contain powers of $x$ divisible by $p$. This follows
because, in characteristic $p$, $\binom{ip}{l}=0$ if $p\nmid l$.
Accordingly, $h_N(x)-\lambda x$ contains only powers of $x$
divisible by $p$. By hypothesis,
\[
g\circ f(x)=h_N(x) + \sum_{k=Np+1}^{(N+1)p-1}b_k(\lambda x +
f_p(x))^k + O(x^{(N+1)p}).
\]
As noted above the monomials of $f_p$ are of degree greater than
or equal to $p$. Consequently,
\[
g\circ f(x)=h_N(x) + \sum_{k=Np+1}^{(N+1)p-1}b_k(\lambda x)^k +
O(x^{(N+1)p}).
\]
Recall that, $h_N(x)-\lambda x$ contains only powers of $x$
divisible by $p$. Consequently, identification term by term with
the right hand side $\lambda g(x)$ yields that $b_k=0$ for all
$Np<k<(N+1)p$ as required.
\end{proof}

In the following we shall estimate the coefficients of the
conjugacy (\ref{conjugacy with powers divisible by p}). As noted
in Section \ref{section the formal solution} these coefficients
are given by the recursion formula (\ref{bk-equation}). Our main
result (Theorem \ref{theorem convergence divisible by p}) of this
section is based on the estimate obtained in Lemma \ref{lemma b_k
estimate p|k} below.

In preparation, we recall the following definitions from Section
\ref{sectiom est of Siegel summary}. The integer $m$ is defined by
\begin{equation}\label{definition m}
m=m(\lambda)=\min\{n\in\mathbb{Z}:n\geq 1,|1-\lambda ^n|<1 \}.
\end{equation}
Note that, by Lemma \ref{lemma distance char p}, $m$ is not
divisible by $p$. Given $m$, the integer $k'$ is defined by
\begin{equation}\label{definition k'}
k'=k'(\lambda)=\min\{k\in\mathbb{Z}:k\geq 1,p|k, m|k-1 \}.
\end{equation}
Note the following lemma.
\begin{lemma}\label{lemma p|l and m|l-1}
Let $k\geq 2$ be an integer. Then,
\[
\lfloor (k-k')/mp +1  \rfloor
\]
is the the number of positive integers $l \leq k$ that satisfies
the two conditions $p\mid l$ and $m\mid l-1$.
\end{lemma}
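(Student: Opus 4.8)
The plan is to reduce the two divisibility conditions to a single congruence via the Chinese Remainder Theorem, and then simply count the resulting arithmetic progression.

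First I would record that $\gcd(m,p)=1$: this follows from Lemma \ref{lemma distance char p}, part 2, which gives $p\nmid m$, together with the fact that $p$ is prime. The two defining conditions $p\mid l$ and $m\mid(l-1)$ are exactly the congruences $l\equiv 0\pmod{p}$ and $l\equiv 1\pmod{m}$. Since $p$ and $m$ are coprime, the Chinese Remainder Theorem produces a unique residue class modulo $mp$ solving both. By the very definition (\ref{definition k'}), $k'$ is the smallest positive integer lying in this class, so the positive solutions are precisely $\{k'+jmp:j\geq 0\}$. Moreover, taking the representative of the class in $\{1,\dots,mp\}$ shows $1\leq k'\leq mp$, a bound I would flag now because it is needed at the end.

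Next I would count how many of these solutions lie in $\{1,\dots,k\}$. A solution $k'+jmp$ with $j\geq 0$ satisfies $k'+jmp\leq k$ if and only if $j\leq (k-k')/(mp)$. Hence, when $k\geq k'$ the number of admissible $j$ is $\lfloor (k-k')/(mp)\rfloor+1$, which equals $\lfloor (k-k')/(mp)+1\rfloor$ because shifting by the integer $1$ commutes with the floor; when $k<k'$ there is no admissible solution and the true count is $0$.

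It remains to verify that the single expression $\lfloor (k-k')/(mp)+1\rfloor$ yields both values. For $k\geq k'$ it is exactly the count just obtained. For $2\leq k<k'$ I would invoke the bound $k'\leq mp$ from the Chinese Remainder step together with $k\geq 2>0$: then $k'-mp\leq 0<k<k'$, so $-1<(k-k')/(mp)<0$, whence $\lfloor (k-k')/(mp)+1\rfloor=0$, matching the empty count. This last case analysis is the one place requiring genuine care; ensuring that the floor formula collapses to zero precisely when no valid $l$ exists is exactly where the hypothesis $k\geq 2$ and the bound $k'\leq mp$ are used, and I expect it to be the main (mild) obstacle.
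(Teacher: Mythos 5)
Your proposal is correct and follows essentially the same route as the paper: both identify the integers satisfying $p\mid l$ and $m\mid l-1$ as the arithmetic progression $k'+mp\,n$ (you via the Chinese Remainder Theorem, the paper by solving $xp\equiv 1 \pmod m$ directly) and then count its members up to $k$. Your explicit verification that the floor expression returns $0$ when $2\leq k<k'$ is a welcome detail the paper leaves implicit.
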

\begin{proof}
Let $k'$ be the smallest positive integer such that $p\mid k'$ and
$m\mid k'-1$. Let $\mathbb{Z}_m$ be the residue class modulo $m$.
Also recall that by definition $p\nmid m$. Accordingly, $k'=j'p$
where $j'$ is the unique solution in $\mathbb{Z}_m$ to the
congruence equation
\begin{equation}\label{equation congruence (k,p)>1}
xp-1\equiv 0 \mod m.
\end{equation}
The integer solutions of (\ref{equation congruence (k,p)>1}) are
thus of the form
\[
x=j'+mn,
\]
where $n$ runs over all the integers. It follows that an integer
$l$ satisfies the two conditions $p\mid l$ and $m\mid l-1$ if and
only if it is of the form
\[
l=(j'+ mn)p=k'+ mpn,
\]
for some integer $n$. Given $k\geq 2$, let $t \geq 0$ be the
largest integer such that
\[
k' + (t-1)mp\leq k.
\]
It follows that
\[
t=\lfloor (k-k')/mp +1  \rfloor,
\]
as required.
\end{proof}

\begin{lemma}\label{lemma b_k estimate p|k}
Let $f\in \mathcal{F}_{\lambda,a}^p(K)$.
Then, the
coefficients of the corresponding conjugacy function satisfies
\begin{equation}\label{estimate b_k (k,p)>1}
|b_k|\leq \frac{a^{k-1}}{|1-\lambda ^m|^{\lfloor (k-k')/mp +1
\rfloor}}.
\end{equation}
for all $k \geq 2$.
\end{lemma}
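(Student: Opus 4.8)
The plan is to argue by strong induction on $k$, feeding the recursion (\ref{bk-equation}) the structural information already established: $b_k=0$ whenever $p\nmid k$ (Lemma \ref{lemma form of conjugacy p|i}), the coefficient bounds $|a_i|\le a^{i-1}$ together with $|a_1|=|\lambda|=1$ (Lemma \ref{lemma f one-to-one}), and the exact size of the small divisors from Lemma \ref{lemma distance char p}. The base case $k=1$ is immediate, since $b_1=1$ and $\lfloor(1-k')/mp+1\rfloor=0$. In the inductive step there is nothing to prove if $p\nmid k$, so I assume $p\mid k$ and write $N(l)=\lfloor(l-k')/mp+1\rfloor$ for brevity.

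First I would bound a single summand of (\ref{bk-equation}). For an index $l$ and a solution $(\alpha_1,\dots,\alpha_k)$ of (\ref{index-equations}), the multinomial coefficient is a nonnegative integer and hence of absolute value at most $1$; subtracting the two equations in (\ref{index-equations}) gives $\sum_{i\ge 2}(i-1)\alpha_i=k-l$, so with $|a_1|=1$ and $|a_i|\le a^{i-1}$ the summand attached to $b_l$ has modulus at most $|b_l|\,a^{k-l}$. By ultrametricity and $|\lambda|=1$,
\[
|b_k|\le\frac{1}{|1-\lambda^{k-1}|}\max_{1\le l\le k-1,\ b_l\neq 0}|b_l|\,a^{k-l}.
\]
Every contributing index is either $1$ or a multiple of $p$ below $k$, hence at most $k-p$; since $N$ is non-decreasing (with $N(1)=N(0)=0$ covering the edge case $k=p$), the induction hypothesis yields
\[
|b_k|\le\frac{1}{|1-\lambda^{k-1}|}\cdot\frac{a^{k-1}}{|1-\lambda^m|^{N(k-p)}}.
\]

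The crux is the comparison of exponents. By Lemma \ref{lemma p|l and m|l-1}, $N(k)-N(k-p)$ counts the integers $l$ in $(k-p,k]$ with $p\mid l$ and $m\mid l-1$; as $k$ is the only multiple of $p$ in that interval, this difference equals $1$ when $m\mid k-1$ and $0$ otherwise. The decisive observation, which I expect to carry the whole argument, is that $p\mid k$ forces $p\nmid k-1$, so in the factorization $k-1=map^j$ of Lemma \ref{lemma distance char p} one necessarily has $j=0$ whenever $m\mid k-1$. Hence $|1-\lambda^{k-1}|=|1-\lambda^m|$ exactly when $N(k)-N(k-p)=1$, and $|1-\lambda^{k-1}|=1$ when the difference is $0$. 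In either case the factor $1/|1-\lambda^{k-1}|$ supplies precisely the one extra power of $1/|1-\lambda^m|$ needed to pass from $N(k-p)$ to $N(k)$, closing the induction.

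The main obstacle, and the reason the hypothesis $p\mid i$ on the monomials of $f$ is indispensable, is exactly this matching of divisor size to the bad-index count: a priori a denominator $|1-\lambda^{k-1}|=|1-\lambda^m|^{p^j}$ with large $j$ would dwarf the single increment $N(k)-N(k-p)=1$, which is what drives the divergence in Theorem \ref{theorem divergence p+1}. Restricting to indices with $p\mid k$ pins $j=0$ at every place where $b_k$ can be nonzero, and this is what I would need to highlight to make the estimate convincing.
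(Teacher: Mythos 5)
Your proof is correct and takes essentially the same route as the paper's: both argue by induction on $k$ using the vanishing $b_k=0$ for $p\nmid k$, the observation that $p\mid k$ forces $j=0$ in Lemma \ref{lemma distance char p} so that $|1-\lambda^{k-1}|\in\{1,|1-\lambda^m|\}$, the count of bad indices from Lemma \ref{lemma p|l and m|l-1}, and the telescoping identity $\sum_{i\geq 2}(i-1)\alpha_i=k-l$ to control the powers of $a$. Your explicit matching of the increment $N(k)-N(k-p)$ with the single factor $1/|1-\lambda^{k-1}|$ is just a more carefully spelled-out version of the step the paper compresses into ``It follows by Lemma \ref{lemma p|l and m|l-1} that \dots''.
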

\begin{proof}
Recall that the coefficients $|b_k|$ are given by the recursion
formula (\ref{bk-equation}) where each factorial term
$l!/\alpha_1!\cdot ...\cdot \alpha_k!$ is an integer and thus of
modulus zero or one, depending on whether it is divisible by $p$ or
not. By Lemma \ref{lemma form of conjugacy p|i}, $b_k=0$ if $k\geq
2$ and $p\nmid k$. If $p\mid k$, we have in view of Lemma
\ref{lemma distance char p} that
\begin{equation}\label{distance 1 - lambda k-1 char p}
  |1- \lambda ^{k-1} |=
  \left\{\begin{array}{ll}
         1, & \textrm{ if \quad $m\nmid k-1$,}\\
        |1-\lambda ^m|, & \textrm{ if \quad $m\mid k-1$}.
  \end{array}\right.
\end{equation}
Also recall that $|a_i|\leq a^{i-1}$.  It follows by Lemma
\ref{lemma p|l and m|l-1} that
\[
|b_k|\leq |1-\lambda ^m|^{-\lfloor (k-k')/mp +1 \rfloor}
a^{\alpha},
\]
for some integer $\alpha$. In view of equation
(\ref{index-equations}) we have
\[
\sum_{i=2}^{k}(i-1)\alpha_i=k-l.
\]
Consequently, since $|a_i|\leq a^{i-1}$, we obtain
\begin{equation}\label{estimate prod a_i}
\prod_{i=2}^k|a_i|^{\alpha_i}\leq
\prod_{i=2}^{k}a^{(i-1)\alpha_i}=a^{k-l}.
\end{equation}
Now we use induction over $k$. By definition $b_1=1$ and,
according to the recursion formula (\ref{bk-equation}), $|b_2|\leq
|1-\lambda^m |^{-\lfloor (2-k')/mp +1 \rfloor }|a|$. Suppose that
\[
|b_l|\leq |1-\lambda ^m|^{-\lfloor (l-k')/mp +1 \rfloor }a^{l-1}
\]
for all $l<k$. Then
\[
|b_k|\leq |1-\lambda ^m|^{-\lfloor (k-k')/mp +1 \rfloor
}a^{l-1}\max\left\{ \prod_{i=2}^k|a_i|^{\alpha_i}\right \},
\]
and the lemma follows by the estimate (\ref{estimate prod a_i}).
\end{proof}

\vspace{1.5ex}\noindent The above estimate of $|b_k|$ is maximal
in the sense that we may have equality in (\ref{estimate b_k
(k,p)>1}) for $k=k'$ as shown by the following example.

\begin{example}\label{example a k'x^k'}
Let $f$ be
of the form
\[
f(x)=\lambda x + a_{k'}x^{k'}.
\]
Then, $a=|a_{k'}|^{1/(k'-1)}$. Also note that by Lemma \ref{lemma
first nonzero terms of the conjugacy}
\[
b_{k'}=a_{k'}/\lambda (1-\lambda ^{k'-1}),
\]
and consequently,
\[
|b_{k'}|=a^{k'-1}/|1-\lambda ^m|.
\]
Thus we have equality in (\ref{estimate b_k (k,p)>1}) for $k=k'$
in this case.
\end{example}

By the estimates in Lemma \ref{lemma b_k estimate p|k} and
application of Proposition \ref{proposition one-to-one}, the
radius of
convergence of the conjugacy function $g$ and its inverse $g^{-1}$
can now be estimated by
\begin{equation}\label{definition semi radius}
\rho=\frac{|1-\lambda ^m|^{\frac{1}{mp}}}{a},
\end{equation}
and
\begin{equation}\label{definition radius of the linearization disc}
\sigma=\frac{|1-\lambda ^m|^{\frac{1}{k'-1}}}{a},
\end{equation}
respectively. In other words, we have the following theorem.

\begin{theorem}\label{theorem convergence divisible by p}
Let $f\in \mathcal{F}_{\lambda,a}^p(K)$.
Then, $f$ is analytically linearizable at $x=0$.
The semi-conjugacy relation
  $g\circ f (x)=\lambda g(x)$ holds on $D_{\rho}(0)$. Moreover,
the full conjugacy $g\circ f \circ g^{-1}(x)=\lambda x$ holds on
$D_{\sigma}(0)$. The latter estimate is maximal in the sense that there
exist examples of such $f$ which have a periodic point on the
sphere $S_{\sigma}(0)$.

\end{theorem}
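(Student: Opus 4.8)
The plan is to extract $R_g$ from the coefficient bound of Lemma \ref{lemma b_k estimate p|k}, to obtain the semi-conjugacy by a domain check, and to produce the full conjugacy by inverting $g$ on $D_\sigma(0)$ through Proposition \ref{proposition one-to-one}. For the radius of convergence I would argue as follows: since $b_k=0$ unless $p\mid k$ (Lemma \ref{lemma form of conjugacy p|i}) and $\lfloor (k-k')/mp+1\rfloor\leq (k-k')/mp+1$, the estimate $|b_k|\leq a^{k-1}|1-\lambda^m|^{-\lfloor (k-k')/mp+1\rfloor}$ gives, using $|1-\lambda^m|<1$,
\[
\limsup_k|b_k|^{1/k}\ \leq\ a\,|1-\lambda^m|^{-1/(mp)},
\]
hence $R_g\geq\rho$. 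Because $|1-\lambda^m|<1$ forces $\rho<1/a$, for $|x|<\rho$ each term satisfies $|a_ix^i|\leq(a|x|)^{i-1}|x|<\rho$, so $f$ maps $D_\rho(0)$ into itself; both sides of the formal identity $g\circ f=\lambda g$ then converge on $D_\rho(0)$, which yields the semi-conjugacy there.

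The heart of the argument is the passage to $D_\sigma(0)$ via the open-disc case of Proposition \ref{proposition one-to-one}, applied to $g$ with $r=\sigma$ and $c_1=b_1=1$. Its hypotheses are $\sigma\leq R_g$ and $|b_k|\sigma^{k-1}\leq 1$ for all $k\geq 2$. Using $a\sigma=|1-\lambda^m|^{1/(k'-1)}$ together with $|1-\lambda^m|<1$, Lemma \ref{lemma b_k estimate p|k} reduces the latter to the arithmetic inequality
\[
\frac{k-1}{k'-1}\ \geq\ \left\lfloor\frac{k-k'}{mp}+1\right\rfloor,\qquad p\mid k,\ k\geq 2 .
\]
I expect this to be the main obstacle, since it is exactly where the definitions of $m$, $k'$, and $\sigma$ interlock. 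It rests on the bound $mp\geq k'-1$: writing $k'=j'p$ with $j'$ the least positive solution of $j'p\equiv 1\pmod m$ gives $1\leq j'\leq m$ and hence $k'\leq mp$. Granting this, for $k\geq k'$ one takes $n=\lfloor (k-k')/mp\rfloor$, uses $k-1\geq (k'-1)+mpn$ to obtain $(k-1)/(k'-1)\geq 1+n$, the right-hand side; for $2\leq k<k'$ the count in Lemma \ref{lemma p|l and m|l-1} is $0$ and the inequality is immediate. The same bound $mp\geq k'-1$ yields $\sigma\leq\rho\leq R_g$, so Proposition \ref{proposition one-to-one} applies and $g$ maps $D_\sigma(0)$ one-to-one onto $D_\sigma(0)$.

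Finally I would assemble the conjugacy and verify maximality. As $\sigma<1/a$, Lemma \ref{lemma f one-to-one} gives $f\colon D_\sigma(0)\to D_\sigma(0)$, while $g$ is a bijection of $D_\sigma(0)$ with inverse $g^{-1}$ defined there; for $x\in D_\sigma(0)$ we have $g^{-1}(x)\in D_\sigma(0)\subseteq D_\rho(0)$, so the semi-conjugacy gives $g\circ f\circ g^{-1}(x)=\lambda g(g^{-1}(x))=\lambda x$, establishing the full conjugacy on $D_\sigma(0)$. For the maximality clause it suffices to exhibit one member of $\mathcal{F}_{\lambda,a}^p(K)$ attaining the bound: in characteristic $2$ with $m=1$ (so $k'=2$ and $\sigma=|1-\lambda|/|a_2|$), the polynomial $f(x)=\lambda x+a_2x^2$ has its nonzero fixed point at $x=(1-\lambda)/a_2$, which lies on $S_\sigma(0)$ and blocks any extension of the conjugacy past $D_\sigma(0)$; more generally, when $a=|a_{k'}|^{1/(k'-1)}$ and $|a_i|<a^{i-1}$ for $i<k'$, the later Theorem \ref{theorem k' as the weierstrass degree summary} produces an indifferent periodic point on $S_\sigma(0)$.
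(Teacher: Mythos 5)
Your proposal is correct and follows essentially the same route as the paper: the same coefficient bound from Lemma \ref{lemma b_k estimate p|k} gives $R_g\geq\rho$, the same arithmetic fact $k'\leq mp$ (via the paper's exponent $\delta(k)=\lfloor (k-k')/mp+1\rfloor/(k-1)$, maximized at $k=k'$) yields $|b_k|\sigma^k\leq|b_1|\sigma$ so that Proposition \ref{proposition one-to-one} makes $g$ a bijection of $D_\sigma(0)$, and the same characteristic-$2$ quadratic with its fixed point at $(1-\lambda)/a_2$ witnesses maximality. The only cosmetic difference is that you verify the inequality $\frac{k-1}{k'-1}\geq\lfloor\frac{k-k'}{mp}+1\rfloor$ directly rather than through the function $\delta$, which is the same computation.
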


\begin{remark}
Note that $g:D_{\sigma}(0)\to D_{\sigma}(0)$ is bijective if we
consider $D_{\sigma}(0)$ in the algebraic closure
$\widehat{K}$.
\end{remark}

\begin{proof} In view of Lemma \ref{lemma b_k estimate
p|k}, $g$ converges on the open disc of radius
\[
1/\limsup|b_k|^{1/k}\geq a^{-1} |1-\lambda ^m|^{1/mp}=\rho .
\]
Given $m\geq 1$, by definition $k'$ must be one of the numbers
\[
p,\quad 2p, \quad \dots,\quad mp,
\]
and accordingly,
\begin{equation}\label{inequality k'-1<mp}
k'-1<mp.
\end{equation}
To estimate the radius of the maximal disc on which $g$ is
one-to-one we consider
\[
 \sigma_0 = a^{-1} \inf_{k\geq
2}|1-\lambda ^m|^{\lfloor (k-k')/mp +1 \rfloor /(k-1)} \leq
\inf_{k\geq 2}\left (\frac{|b_1|}{|b_{k}|}\right)^{1/(k-1)}.
\]
We will show that the maximum value of the exponent
\[
\delta(k)=\lfloor (k-k')/mp +1 \rfloor /(k-1)
\]
is attained if and only if $k=k'$. First note that $\delta(k)=0$
for all $2\leq k<k'$. In view of (\ref{inequality k'-1<mp}), we have for each integer $n\geq 1$
that
\[
\delta(k'+nmp)-\frac{1}{k'-1}=\frac{n((k'-1)-mp)}{(k'-1+nmp)(k'-1)}<0.
\]
Moreover,
\[
\delta (k)< \delta (k'+ nmp), \quad\textrm{if $mnp< k-k'<
(n+1)mp$}.
\]
Hence, the maximum of $\delta $ is attained if and only if $k=k'$
so that
\[
\sigma_0 =a^{-1} |1-\lambda ^m|^{\frac{1}{k'-1}}=\sigma.
\]
Note that in view of (\ref{inequality k'-1<mp}) $\sigma < \rho $
so that $g$ certainly converges on the closed disc
$\overline{D}_{\sigma}(0)$. Also note that, in terms of $\delta$,
we have by (\ref{estimate b_k (k,p)>1}) that
\[
|b_k|\leq a^{k-1} |1-\lambda ^m|^{-(k-1)\delta (k) }.
\]
Consequently, according to the derived properties of $\delta $,
\begin{equation}\label{estimate sigma (k,p)>1}
|b_k|\sigma^k\leq a^{-1} |1-\lambda
^m|^{\frac{1}{k'-1}+(k-1)(\frac{1}{k'-1}- \delta (k)) }
 \leq
\sigma = |b_1|\sigma.
\end{equation}
In view of Proposition \ref{proposition one-to-one} $g:
D_{\sigma}(0)\to D_{\sigma}(0)$ is a bijection if we consider
$D_{\sigma}(0)$ in $\widehat{K}$. Consequently, $g:
D_{\sigma}(0)\to D_{\sigma}(0)$ is one-to-one if we consider
$D_{\sigma}(0)$ in $K$.

Recall that by Lemma \ref{lemma  f one-to-one} $f:D_{1/a}(0)\to
D_{1/a}(0)$ is one-to-one.
Moreover, $1/a>\rho
>\sigma$. It follows that the
semi-conjugacy and the full conjugacy holds on $D_{\rho}(0)$ and
$D_{\sigma}(0)$ respectively.

That this estimate of $\sigma$ is maximal follows from the
following example. Let char$K=2$ and $f(x)=\lambda x+
a_2x^2\in K[x]$, where $|1-\lambda |<1$. Then $m=1$ and
$k'=p=2$ so that $\sigma = |1-\lambda |/|a_2|$. But
$\hat{x}=(1-\lambda)/a_2$ is a fixed point of $f$, breaking the
conjugacy on $S_{\sigma }(0)$. This completes the proof of Theorem
\ref{theorem convergence divisible by p}. \end{proof}

\vspace{1.5ex}\noindent If $b_{k'}=0$, then we can extend the
estimate of the full conjugacy to the disc $D_{\rho}(0)$.

\begin{lemma}\label{lemma bk'=0}
Let $f\in \mathcal{F}_{\lambda,a}^p(K)$. Suppose the
coefficient $b_{k'}$ of the conjugacy function $g$ is equal to
zero.
Then, the full conjugacy $g\circ f \circ g^{-1}(x)=\lambda x$
holds on a disc larger than or equal to $D_{\rho}(0)$ or
$\overline{D}_{\rho}(0)$, depending on whether $g$ converges on the
closed disc $\overline{D}_{\rho}(0)$ or not.
\end{lemma}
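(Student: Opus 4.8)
The plan is to reduce the statement, exactly as in the proof of Theorem~\ref{theorem convergence divisible by p}, to a quantitative injectivity statement for the conjugacy $g$. Recall from that proof that the full conjugacy $g\circ f\circ g^{-1}(x)=\lambda x$ holds on any disc contained in the semi-conjugacy disc $D_\rho(0)$ on which $g$ converges and is one-to-one: since $f:D_{1/a}(0)\to D_{1/a}(0)$ is one-to-one (Lemma~\ref{lemma  f one-to-one}) and the semi-conjugacy $g\circ f=\lambda g$ holds on $D_\rho(0)$, for $x$ in such a disc one has $g^{-1}(x)\in D_\rho(0)$ and hence $g(f(g^{-1}(x)))=\lambda g(g^{-1}(x))=\lambda x$. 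It therefore suffices to show that, under the hypothesis $b_{k'}=0$, the map $g$ is one-to-one on $D_\rho(0)$, and on $\overline{D}_\rho(0)$ whenever $g$ converges there. By Proposition~\ref{proposition one-to-one} this amounts to verifying the inequality $|b_k|\rho^{k-1}\le 1$ for all $k\ge 2$ (recall $b_1=1$), working in $\widehat K$ as in Theorem~\ref{theorem convergence divisible by p} and then descending to $K$.

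The crux is that the estimate of Lemma~\ref{lemma b_k estimate p|k} is by itself too weak for this. Writing $n(k)=\lfloor (k-k')/mp+1\rfloor$ for the exponent appearing there (equal, by Lemma~\ref{lemma p|l and m|l-1}, to the number of small-divisor indices $\le k$), that lemma gives $|b_k|\rho^{k-1}\le|1-\lambda^m|^{(k-1)/mp-n(k)}$, and the exponent $(k-1)/mp-n(k)$ is \emph{negative} precisely at the small-divisor indices $k=k'+jmp$. The role of the hypothesis $b_{k'}=0$ is to save one power of $|1-\lambda^m|$: I would prove, by induction on $k$, the improved bound
\[
|b_k|\le a^{k-1}\,|1-\lambda^m|^{-(n(k)-1)}\qquad\text{for all }k\ge k'.
\]
The base case $k=k'$ is trivial since $b_{k'}=0$. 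For the inductive step at $k>k'$ I would feed the recursion (\ref{bk-equation}) together with the bounds $\prod_i|a_i|^{\alpha_i}\le a^{k-l}$ and $|l!/\alpha_1!\cdots\alpha_k!|\le 1$, exactly as in Lemma~\ref{lemma b_k estimate p|k}; the new point is a careful bookkeeping of the exponent of $|1-\lambda^m|$. Splitting the summation index $l$ into the range $l<k'$ (where only the crude bound, with exponent $0$, is available) and $k'\le l\le k-1$ (where the inductive improved bound applies), one checks that, because $n(k-1)-1\ge 0$ for $k>k'$, the dominant contribution comes from the second range and produces exponent $n(k-1)-1$. The denominator $1-\lambda^{k-1}$ contributes one further power of $|1-\lambda^m|^{-1}$ exactly when $m\mid k-1$, and since for $p\mid k$ one has $n(k)=n(k-1)+1$ in that case and $n(k)=n(k-1)$ otherwise, the total exponent is exactly $n(k)-1$, as claimed.

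Granting the improved bound, the conclusion is immediate. For $k\ge k'$ one gets $|b_k|\rho^{k-1}\le|1-\lambda^m|^{(k-1)/mp-n(k)+1}$, and since $n(k)\le (k-k')/mp+1$ the exponent is bounded below by $(k'-1)/mp>0$; for $2\le k<k'$ the crude bound already gives exponent $(k-1)/mp>0$. Hence $|b_k|\rho^{k-1}<1$ for every $k\ge 2$, so $|b_k|\rho^{k}<|b_1|\rho$. By the open-disc part of Proposition~\ref{proposition one-to-one}, $g$ is one-to-one on $D_\rho(0)$, and the full conjugacy extends to $D_\rho(0)$. If in addition $g$ converges on $\overline{D}_\rho(0)$, then the maximum of $|b_k|\rho^k$ is attained only at $k=1$, so the degree $d$ in the closed-disc part of Proposition~\ref{proposition one-to-one} equals $1$; thus $g$ is one-to-one on $\overline{D}_\rho(0)$ and the full conjugacy extends there. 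The main obstacle is the inductive bookkeeping for the improved bound — in particular confirming that for $k>k'$ the extremal term of the recursion is supplied by an index $l\ge k'$, so that the single saved power of $|1-\lambda^m|$ is genuinely inherited rather than lost among the lower-order terms.
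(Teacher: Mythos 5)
Your proposal is correct and follows essentially the same route as the paper: the hypothesis $b_{k'}=0$ removes the first small-divisor index from the count, yielding the bound $|b_k|\le a^{k-1}|1-\lambda^m|^{-(n(k)-1)}$, which is exactly the paper's estimate $|b_k|\le a^{k-1}|1-\lambda^m|^{-(k-1)\gamma(k)}$ with the shifted index $k''=k'+mp$ (indeed $\lfloor (k-k'')/mp+1\rfloor=n(k)-1$ for $k\ge k''$), after which both arguments deduce the strict inequality $|b_k|\rho^k<|b_1|\rho$ and invoke Proposition \ref{proposition one-to-one} together with the injectivity of $f$ on $D_{1/a}(0)$. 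Your inductive bookkeeping is just a more explicit version of the step the paper disposes of with ``in the same way as in Lemma \ref{lemma b_k estimate p|k}.''
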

\begin{proof} Recall that $k'$ is the positive
integer defined by (\ref{definition k'}). Assume that $b_{k'}=0$.
Let $k''>k'$ be the integer
\[
k'':=\min\{k\in \mathbb{Z}:k>k',b_{k}\neq 0,p\mid k,m\mid k-1 \}.
\]
In the same way as in Lemma \ref{lemma b_k estimate p|k},
\[
|b_k|\leq \frac{a^{k-1}}{|1-\lambda ^m|^{(k-1)\gamma(k)}, }
\]
where
\[
\gamma (k)= \left \{
\begin{array}{ll} \lfloor (k-k'')/mp +1
\rfloor/(k-1), & \quad k\geq k'',\\
0, & \quad k<k''.
\end{array}\right.
\]
It follows from the proof of Lemma \ref{lemma p|l and m|l-1} that
$ k''=k'+mpn$,  for $n= 1$.  Hence,
\[
k''\geq p + mp.
\]
Consequently,
\[
\gamma (k)-\frac{1}{mp}\leq
\frac{k-k''}{(k-1)mp}+\frac{1}{k-1}-\frac{1}{mp}=\frac{mp-(k''-1)}{(k-1)mp}<0,
\]
for $k\geq 2$. Moreover,
\[
\lim_{k\to\infty}\gamma (k)=1/mp
\]
so that $\sup \gamma (k)=1/mp$. Accordingly, $\max \gamma (k)$
does not exist. Also note that for $\rho =a^{-1}|1-
\lambda ^m|^{1/mp}$, we have
\[
|b_k|\rho ^k\leq a^{-1}|1-\lambda
^m|^{\frac{1}{mp}+(k-1)(\frac{1}{mp}-\gamma (k))}.
\]
Hence, according to the derived properties of $\gamma $,
\begin{equation}\label{inequality g one to one  k''}
|b_k|\rho ^k <\rho =|b_1|\rho,
\end{equation}
for $k\geq 2$. It follows that $g:D_{\rho}(0)\to D_{\rho}(0)$ is
one-to-one. If $g$ converges on the closed disc
$\overline{D}_{\rho}(0)$, then, since we have strict inequality in
(\ref{inequality g one to one  k''}), we also have that  $g:\overline{D}_{\rho}(0)\to
\overline{D}_{\rho}(0)$ is one-to-one.

Recall that $f:D_{1/a}(0)\to D_{1/a}(0) $ is one-to-one and that
$1/a>\rho$. Consequently, the full conjugacy $g\circ f \circ
g^{-1}(x)=\lambda x$ holds on a disc larger than or equal to
$D_{\rho}(0)$ or $\overline{D}_{\rho}(0)$, depending on whether $g$
converges the closed disc $\overline{D}_{\rho}(0)$ or not.
\end{proof}

\vspace{1.5ex}\noindent Note that a sufficient condition that
$b_{k'}=0$ is that $a_i=0$ for all $2\leq i\leq k'$. In fact, in
view of Lemma \ref{lemma first nonzero terms of the conjugacy} we
and the previous lemma we have the following result.

\begin{theorem}
Let $f\in \mathcal{F}_{\lambda,a}^p(K)$ be of the form
\[
f(x)=\lambda x +\sum_{i\geq i_0}a_ix^i,
\]
for some integer $i_0>k'$. Then,
the full conjugacy $g\circ f \circ g^{-1}(x)=\lambda x$ holds on a
disc larger than or equal to $D_{\rho}(0)$ or
$\overline{D}_{\rho}(0)$, depending on whether $g$ converges on the
closed disc $\overline{D}_{\rho}(0)$ or not.
\end{theorem}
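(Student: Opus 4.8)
The plan is to reduce the statement directly to Lemma \ref{lemma bk'=0}, whose sole hypothesis is that the coefficient $b_{k'}$ of the conjugacy vanishes. The whole argument therefore consists in verifying $b_{k'}=0$ for an $f$ of the prescribed shape, after which the previous lemma applies verbatim and yields the conclusion. No new estimates are needed; the work is entirely in checking the hypothesis of the earlier lemma.

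First I would pass to the lowest genuinely nonzero nonlinear coefficient. If $a_i=0$ for all $i\geq 2$, then $f(x)=\lambda x$, $g(x)=x$, and $b_{k'}=0$ trivially; otherwise let $i_0'\geq i_0$ be the least index with $a_{i_0'}\neq 0$. Applying Lemma \ref{lemma first nonzero terms of the conjugacy} with this $i_0'$ shows that the formal conjugacy has the matching shape $g(x)=x+\sum_{k\geq i_0'}b_k x^k$; equivalently, $b_k=0$ for every $k$ with $1<k<i_0'$.

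Second, I would place $k'$ strictly inside that vanishing range. By the definition (\ref{definition k'}) of $k'$ we have $p\mid k'$, so $k'\geq p\geq 2>1$, while the standing hypothesis $i_0>k'$ gives $k'<i_0\leq i_0'$. Hence $1<k'<i_0'$, and the previous step forces $b_{k'}=0$.

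With $b_{k'}=0$ established, the hypothesis of Lemma \ref{lemma bk'=0} is met, and its conclusion is precisely the assertion of the present theorem: the full conjugacy $g\circ f\circ g^{-1}(x)=\lambda x$ holds on a disc containing $D_{\rho}(0)$, or $\overline{D}_{\rho}(0)$, according to whether $g$ converges on the closed disc $\overline{D}_{\rho}(0)$. There is no substantive obstacle; the only point requiring care is the chain $1<k'<i_0\leq i_0'$ that places $k'$ in the range where the conjugacy coefficients vanish, and this is immediate from $p\mid k'$ together with $i_0>k'$.
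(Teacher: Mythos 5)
Your proof is correct and follows exactly the route the paper intends: the remark preceding this theorem states that $a_i=0$ for $2\leq i\leq k'$ forces $b_{k'}=0$ via Lemma \ref{lemma first nonzero terms of the conjugacy}, and the conclusion then follows verbatim from Lemma \ref{lemma bk'=0}. Your extra care in passing to the least index $i_0'$ with $a_{i_0'}\neq 0$ (and handling the trivial case $f(x)=\lambda x$) only tightens a detail the paper leaves implicit.
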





\section{Linearization discs and periodic points}\label{section linearization discs and periodic points}

In this section we give sufficient conditions under which the
estimate $\sigma$ in Theorem \ref{theorem convergence divisible by
p}
is maximal in the sense that $D_{\sigma}(0)$ is equal to the
linearization disc. In other words, we give sufficient conditions under
which $D_{\sigma}(0)$ is  the maximal disc $U\subset K$
about the fixed point $x=0$, such that the conjugacy $g\circ
f\circ g^{-1}(x)=\lambda x$ holds for all $x\in U$.

In theorem \ref{theorem convergence divisible by p} we proved that
the estimate $\sigma$ is maximal in the sense that, in the the
special case that $f$ is quadratic and $|1-\lambda|<1$ ($m=1$),
there is a fixed point in the algebraic closure
$\widehat{K}$ on the sphere $S_{\sigma}(0)$, breaking the
conjugacy there. Using the notion of Weierstrass degree of the
conjugacy function, defined below, we will give sufficients
conditions for the existence of an indifferent periodic point on the boundary
$S_{\sigma}(0)$ for more general $f$.


The Weierstrass degree is defined as follows. Let $K$ be an
algebraically closed complete non-Archimedean field. Let $U\subset
K$ be a rational closed disc, and let $h$ be a power series which
converges on $U$. For any disc $V\subseteq U$, the
\emph{Weierstrass degree} or simply the degree $\deg (h,V)$ of $h$
on $V$ is the number $d$ (if $V$ is closed) or $d'$ (if $V$ is
open) in Proposition \ref{proposition-discdegree}. Note that if
$0\in h(V)$, then the Weierstrass degree is the same as the notion
of degree as 'the number of pre-images of a given point, counting
multiplicity'.
More information on the properties of the Weierstrass
degree can be found in \cite{Benedetto:2003a}.

The following lemma shows that a shift of the value of Weierstrass
degree from $1$ to $d>1$,  of the conjugacy function on a sphere
$S$, reveals the existence of an indifferent periodic point on the sphere $S$.

\begin{lemma}\label{lemma weierstrass degree and per points}
Let $K$ be a complete non-Archimedean field. Let $f$ be a
linearizable power series of the form $f(x)=\lambda x +\sum_{i\geq
2} a_i x^i\in K[[x]]$, such that $|\lambda |=1$ and $a=\sup_{i\geq
2}|a_i|^{1/(i-1)}$. Let $g$ be the corresponding conjugacy
function. Let $\tau <1/a$. Suppose $\deg (g,D_{\tau}(0))=1$ and
$\deg(g,\overline{D}_{\tau}(0)) = d>1$ in the algebraic closure
$\widehat{K}$. Then, $f$ has an indifferent periodic point in $\widehat{K}$ on
the sphere $S_{\tau}(0)$ of period $\kappa\leq d $. In particular,
$D_{\tau}(0)$ is the linearization disc of $f$ about the fixed point at
the origin.
\end{lemma}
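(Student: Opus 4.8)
The plan is to produce the periodic point among the zeros of the conjugacy $g$ on the sphere $S_\tau(0)$, using that $f$ must permute this finite set. Throughout I work in $\widehat K$, where the hypotheses on $\deg(g,\cdot)$ are stated, and I note $\tau\in|\widehat K^*|$ since $\overline D_\tau(0)$ is a rational closed disc.

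First I would record the mapping properties of $f$. Since $\tau<1/a$, Lemma \ref{lemma f one-to-one} gives that $f$ is injective on $\overline D_\tau(0)\subset D_{1/a}(0)$. For $|x|=\tau$ the linear term dominates, because $|a_ix^i|\le(a\tau)^{i-1}\tau<\tau$ for $i\ge2$; hence $|f(x)|=\tau$, so $f$ maps $S_\tau(0)$ into itself, and the same estimate yields $|f'(x)|=1$ on $S_\tau(0)$, which I reserve for the indifference claim. I would then check that the semiconjugacy $g\circ f(x)=\lambda g(x)$ holds pointwise on all of $\overline D_\tau(0)$: it is an identity of formal power series by the definition of $g$, the series $g$ converges on $\overline D_\tau(0)$ by hypothesis, and $f(\overline D_\tau(0))\subseteq\overline D_\tau(0)$, so both sides converge and agree.

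The core step is the zero count forced by the degree jump. The hypotheses $\deg(g,D_\tau(0))=1$ and $\deg(g,\overline D_\tau(0))=d$ are precisely the input of Proposition \ref{proposition one-to-one} (with $c_1=g'(0)=1$ and $r=\tau$): $g$ maps $D_\tau(0)$ bijectively onto $D_\tau(0)$ and maps $\overline D_\tau(0)$ onto $\overline D_\tau(0)$ exactly $d$-to-$1$. Counting preimages of $0=g(0)$ with multiplicity, the origin contributes a single simple zero (as $g'(0)=1$), so the remaining $d-1\ge1$ zeros all lie on $S_\tau(0)$. Let $Z\subset S_\tau(0)$ be the finite, nonempty set of distinct such zeros. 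For $\hat x\in Z$ the semiconjugacy gives $g(f(\hat x))=\lambda g(\hat x)=0$ with $f(\hat x)\in S_\tau(0)$, so $f(Z)\subseteq Z$; since $f$ is injective and $Z$ is finite, $f|_Z$ is a permutation of $Z$, whence each $\hat x\in Z$ is periodic with period $\kappa\le|Z|\le d-1\le d$. Indifference follows at once from the chain rule, $(f^\kappa)'(\hat x)=\prod_{i=0}^{\kappa-1}f'(f^i(\hat x))$ being a product of factors of absolute value $1$.

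Finally I would deduce that $D_\tau(0)$ is the linearization disc. On $D_\tau(0)$ the map $g$ is bijective, so $g^{-1}$ exists and the full conjugacy $g\circ f\circ g^{-1}(x)=\lambda x$ holds, giving $D_\tau(0)\subseteq\Delta$. Conversely, any disc about $0$ strictly larger than $D_\tau(0)$ must contain $\overline D_\tau(0)$, hence a point $\hat x\in Z$ with $\hat x\ne0$ and $g(\hat x)=0=g(0)$; on such a disc $g$ is not injective, so $g^{-1}$ is not defined and the full conjugacy cannot hold there. Thus $\Delta=D_\tau(0)$. I expect the main obstacle to be the bookkeeping in the zero count, namely justifying that the degree drop from $d$ to $1$ between the closed and open discs deposits exactly $d-1$ zeros (with multiplicity) on the sphere and that finiteness of $Z$ upgrades the injective $f$ to a permutation; a secondary point needing care is that the semiconjugacy genuinely holds on the whole of $\overline D_\tau(0)$, not merely on the smaller disc guaranteed by linearizability.
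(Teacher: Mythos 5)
Your proposal is correct and follows essentially the same route as the paper: the degree jump of $g$ from $1$ on $D_\tau(0)$ to $d$ on $\overline{D}_\tau(0)$ deposits zeros of $g$ on $S_\tau(0)$, the Schr\"oder equation makes this zero set forward-invariant under the injective map $f$, and finiteness forces periodicity with period at most $d$. Your bookkeeping differs only cosmetically (you argue that $f$ permutes the whole finite zero set rather than tracking the orbit of a single zero and bounding the number of solutions of $g(x)=y$, and you verify indifference by computing $|f'|=1$ on the sphere and using the chain rule instead of invoking Proposition \ref{proposition-discdegree}), and your explicit argument that no strictly larger disc can be a linearization disc fills in a step the paper leaves implicit.
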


\begin{proof}
Let  $\tau <1/a$, and suppose $\deg (g,D_{\tau}(0))=1$ and
$\deg(g,\overline{D}_{\tau}(0)) = d>1$. Note that by definition
$\tau\in |\widehat {K}^*|$. Hence, $S_{\tau}(0)$ is rational and
non-empty in the algebraic closure $\widehat{K}$. The proof that
there is a periodic point in $\widehat{K}$  on the sphere
$S_{\tau}(0)$, goes as follows. Since the conjugacy $g$ maps the
closed disc $\overline{D}_{\tau}(0)$ onto itself exactly
$d$-to-$1$ (counting multiplicity), and the open disc
$D_{\tau}(0)$ one-to-one onto itself, there exist at least one
point $\hat{x}\in S_{\tau}(0)$ such that $g(\hat{x})=0$. In view
of the Schr\"{o}der functional equation
\begin{equation}\label{equation causing periodic point}
|g(f(\hat{x}))-g(f^{\circ n}(\hat{x}))|=|g(\hat{x})||\lambda
-\lambda ^n|=0,
\end{equation}
for all $n\geq 1$. Recall that $ 1/a>\tau$. By Lemma \ref{lemma f
one-to-one} $f:\overline{D}_{\tau}(0)\rightarrow
\overline{D}_{\tau}(0)$ is bijective in $\widehat{K}$.
The same is true for all the iterates $f^{\circ n}$, $n\geq 1$.
Moreover $f^{\circ n}(0)=0$ and consequently $f^{\circ n}$ can
have no zeros on the sphere $S_{\tau}(0)$ for any $n\geq 1$. In
particular $f^{\circ n}(\hat{x})\neq 0$ for all $n\geq 1$.

Let $y=g(f(\hat{x}))$. Then $y\in \overline{D}_{\tau}(0)$. The
equation $g(x)=y$ can have only $d$ solutions on
$\overline{D}_{\tau}(0)$ and we conclude from (\ref{equation
causing periodic point}) that we must have that $f^{\circ (\kappa
+1)}(\hat{x})=f(\hat{x})$ for some $\kappa \leq d$. This shows the
existence of a periodic point $\hat{x}\in\widehat{K}$ of
period $\kappa\leq d $.

Finally, since $f^{\circ \kappa}$ is one-to-one on  $\overline{D}_{\tau}(0)$,
it follows by the first statement of proposition \ref{proposition-discdegree} that 
$| (f^{\circ \kappa})'(\hat{x})|=1$. This proves that $\hat{x}$ is indifferent.

\end{proof}

Let us return to the case $f\in
\mathcal{F}_{\lambda,a}^p(K)$.
By Theorem \ref{theorem convergence divisible by p}, the
Weierstrass degree of $g$ on the open disc $\deg
(g,D_{\sigma}(0))=1$.
In the following lemma we find a necessary and sufficient
condition that the Weierstrass degree on the closed disc
$\deg(g,\overline{D}_{\sigma}(0))>1$. Again, the integer $k'$,
defined by (\ref{definition k'}), plays a significant role.
\begin{lemma}\label{lemma sufficient cond per point}
Let $f\in \mathcal{F}_{\lambda,a}^p(K)$. Then, in
$\widehat{K}$, $\deg(g,\overline{D}_{\sigma}(0))>1$ if
and only if the coefficient, $b_{k'}$, of $g$ satisfies
\begin{equation}\label{condition degree >1}
|b_{k'}|=a^{k'-1}/|1-\lambda ^m|.
\end{equation}
Moreover, if (\ref{condition degree >1}) holds, then
$\deg(g,\overline{D}_{\sigma}(0))=k'$.
\end{lemma}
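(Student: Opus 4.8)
The plan is to read the Weierstrass degree directly off Proposition \ref{proposition one-to-one} (equivalently Proposition \ref{proposition-discdegree}), feeding in the coefficient bounds already established for $g$. On $\overline{D}_{\sigma}(0)$ the degree $d=\deg(g,\overline{D}_{\sigma}(0))$ is $\max\{k\geq 1:|b_k|\sigma^k=s\}$, where $s=\max_k|b_k|\sigma^k$. Since $b_1=1$ we have $|b_1|\sigma=\sigma$, so $s\geq\sigma$; and the proof of Theorem \ref{theorem convergence divisible by p} supplies the reverse bound $|b_k|\sigma^k\leq\sigma$ for every $k$. Hence $s=\sigma$, and the entire question reduces to deciding for which indices $k$ the equality $|b_k|\sigma^k=\sigma$ can hold. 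The hypotheses of Proposition \ref{proposition one-to-one} are met precisely by that bound, so I can invoke it for the degree formula.

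First I would re-examine the exponent bookkeeping from that same proof. Writing $\delta(k)=\lfloor(k-k')/mp+1\rfloor/(k-1)$, the estimate of Lemma \ref{lemma b_k estimate p|k} gives $|b_k|\sigma^k\leq a^{-1}|1-\lambda^m|^{\frac{1}{k'-1}+(k-1)(\frac{1}{k'-1}-\delta(k))}$. The proof of Theorem \ref{theorem convergence divisible by p} already shows $\delta(k)\leq\delta(k')=1/(k'-1)$, with equality if and only if $k=k'$. Consequently, for every $k>1$ with $k\neq k'$ the exponent strictly exceeds $1/(k'-1)$; since $|1-\lambda^m|<1$, this forces $|b_k|\sigma^k<\sigma$ strictly. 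Thus $k'$ is the only index greater than $1$ that can achieve $|b_k|\sigma^k=\sigma$.

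This yields the equivalence at once. The degree $\deg(g,\overline{D}_{\sigma}(0))$ exceeds $1$ exactly when the maximum $s=\sigma$ is attained at some $k>1$, and by the previous step that index can only be $k=k'$. The condition $|b_{k'}|\sigma^{k'}=\sigma=|b_1|\sigma$ rearranges, using $\sigma=|1-\lambda^m|^{1/(k'-1)}/a$, to $|b_{k'}|=a^{k'-1}/|1-\lambda^m|$, which is precisely (\ref{condition degree >1}). For the final assertion, once (\ref{condition degree >1}) holds the indices realizing $|b_k|\sigma^k=\sigma$ are exactly $k=1$ and $k=k'$, so the largest such index is $k'$, giving $\deg(g,\overline{D}_{\sigma}(0))=k'$.

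The only genuine obstacle I anticipate is confirming the strictness of $\delta(k)<1/(k'-1)$ uniformly for all $k>1$ with $k\neq k'$, in particular ruling out that some large $k$ climbs back up to equality. But this is already handled by the two displayed inequalities in the proof of Theorem \ref{theorem convergence divisible by p} (the computation showing $\delta(k'+nmp)-1/(k'-1)<0$ together with the interpolation $\delta(k)<\delta(k'+nmp)$ for intermediate $k$). Beyond carefully transcribing those bounds, the argument is a direct application of the degree formula, so I do not expect serious difficulty.
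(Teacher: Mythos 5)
Your proposal is correct and follows essentially the same route as the paper: both arguments rest on the bound $|b_k|\sigma^k\leq\sigma$ from the proof of Theorem \ref{theorem convergence divisible by p}, the fact that equality there can occur only at $k=1$ or $k=k'$ (via the strict inequality $\delta(k)<1/(k'-1)$ for $k\neq k'$), and the Weierstrass degree formula of Proposition \ref{proposition one-to-one}. Your write-up merely makes explicit the exponent bookkeeping that the paper's proof leaves implicit.
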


\begin{proof}
By the estimate (\ref{estimate b_k (k,p)>1}) we always have
\[
|b_{k'}|\leq a^{k'-1}/|1-\lambda ^m|.
\]
If $|b_{k'}|=a^{k'-1}/|1-\lambda ^m|$. Then, we have equality in
(\ref{estimate sigma (k,p)>1}) if and only if $k=1$ or $k=k'$.
Consequently, in the algebraic closure $\widehat{K}$, $g$
maps the closed disc $\overline{D}_{\sigma}(0)$ onto
$\overline{D}_{\sigma}(0)$ exactly $k'$-to-$1$ counting
multiplicity. It follows that the Weierstrass degree
$\deg(g,\overline{D}_{\sigma}(0))=k'>1$.

On the other hand, if $|b_{k'}|<a^{k'-1}/|1-\lambda ^m|$, then we
have equality in (\ref{estimate sigma (k,p)>1}) if and only if
$k=1$. Consequently, $\deg(g,\overline{D}_{\sigma}(0))=1$ in this
case.
\end{proof}


\vspace{1.5ex}\noindent If $f$ is of the form as in Example
\ref{example a k'x^k'}, then Lemma \ref{lemma sufficient cond per
point} applies and we have.

\begin{theorem}
Let $f(x)=\lambda x +a_{k'}x^{k'}$, where $a_{k'}\neq 0$. Then,
the linearization disc of $f$ about the origin is equal to
$D_{\sigma}(0)$. In in the algebraic closure
$\widehat{K}$, we have
$\deg(g,\overline{D}_{\sigma}(0))=k'$. Moreover, $f$ has an
indifferent periodic point in $\widehat{K}$ on the sphere
$S_{\sigma}(0)$ of period $\kappa\leq k'$, with multiplier
$\lambda ^{\kappa }$.
\end{theorem}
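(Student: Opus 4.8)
The plan is to obtain the theorem as a direct consequence of the machinery already developed, with only the exact value of the multiplier requiring a separate, very short observation. First I would record that $f(x)=\lambda x+a_{k'}x^{k'}$ is precisely of the form treated in Example \ref{example a k'x^k'}. From that example, $a=|a_{k'}|^{1/(k'-1)}$ and the coefficient of the conjugacy satisfies $|b_{k'}|=a^{k'-1}/|1-\lambda^m|$, so that condition (\ref{condition degree >1}) of Lemma \ref{lemma sufficient cond per point} holds with equality. Invoking that lemma then yields $\deg(g,\overline{D}_{\sigma}(0))=k'$ in $\widehat{K}$.

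Next I would combine this with Theorem \ref{theorem convergence divisible by p}, which gives that the full conjugacy holds on $D_{\sigma}(0)$ and that $g:D_{\sigma}(0)\to D_{\sigma}(0)$ is one-to-one, i.e.\ $\deg(g,D_{\sigma}(0))=1$ in $\widehat{K}$. Since $\sigma<\rho<1/a$, the hypotheses of Lemma \ref{lemma weierstrass degree and per points} are met with $\tau=\sigma$ and $d=k'$. Applying that lemma produces an indifferent periodic point $\hat{x}\in\widehat{K}$ on the sphere $S_{\sigma}(0)$ of period $\kappa\leq k'$, and simultaneously identifies $D_{\sigma}(0)$ as the linearization disc of $f$ about the origin. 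This already delivers every assertion of the theorem except the precise value of the multiplier.

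For the multiplier I would exploit the defining property $p\mid k'$ from (\ref{definition k'}). Differentiating $f$ gives $f'(x)=\lambda+k'a_{k'}x^{k'-1}$, and since $p\mid k'$ the integer $k'$ vanishes in the characteristic-$p$ field $K$, so $f'(x)=\lambda$ identically. By the chain rule the multiplier of the cycle through $\hat{x}$ is therefore $(f^{\circ\kappa})'(\hat{x})=\prod_{i=0}^{\kappa-1}f'(f^{\circ i}(\hat{x}))=\lambda^{\kappa}$, which both re-confirms indifference ($|\lambda^{\kappa}|=1$) and pins down the multiplier exactly. Equivalently, one may differentiate the iterated semi-conjugacy $g\circ f^{\circ n}=\lambda^{n}g$, valid on $\overline{D}_{\sigma}(0)\subset D_{\rho}(0)$, and use that $g'\equiv 1$ in characteristic $p$ since the nontrivial monomials of $g$ all have degree divisible by $p$ by Lemma \ref{lemma form of conjugacy p|i}.

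The real work is thus confined to verifying that the hypotheses line up; the only place demanding care is checking that condition (\ref{condition degree >1}) holds with equality, which is exactly the content of Example \ref{example a k'x^k'} and rests on $a=|a_{k'}|^{1/(k'-1)}$ for a single-term perturbation. I do not anticipate any genuine obstacle beyond this bookkeeping: the one point where an indifferent but otherwise unidentified multiplier might have been expected — its exact equality to $\lambda^{\kappa}$ — collapses immediately once one observes that $f'\equiv\lambda$.
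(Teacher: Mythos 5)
Your proposal is correct and follows essentially the same route as the paper: the paper's own proof likewise treats everything except the multiplier as already established by Example \ref{example a k'x^k'}, Lemma \ref{lemma sufficient cond per point}, Theorem \ref{theorem convergence divisible by p} and Lemma \ref{lemma weierstrass degree and per points}, and then notes that $(f^{\circ n})'(x)=\lambda^n$ because the nonlinear monomials of $f$ all have degree divisible by $p$. Your explicit verification of the chain of hypotheses (including $\sigma<\rho<1/a$) and the observation $f'\equiv\lambda$ since $p\mid k'$ match the paper's argument exactly.
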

\begin{proof}
It remains to prove that the multiplier of the periodic point is
of the form $\lambda ^{\kappa }$.  Because the degree of the
nonlinear monomials of $f\in
\mathcal{F}_{\lambda,a}^p(K)$ are all divisible by char
$K=p$, the derivative $\left (f^{\circ n} \right )'(x)
=\lambda ^n$ for all $x\in \widehat{K}$ and all $n\geq
1$. It follows that $\hat{x}$ is an indifferent periodic point of
period $\kappa \leq d$, with multiplier $\lambda ^{\kappa}$.
\end{proof}

In fact, this result can be generalized according to the following
theorem.

\begin{theorem}
Let $f\in \mathcal{F}_{\lambda,a}^p(K)$. Suppose
$a=|a_{k'}|^{1/(k'-1)}$ and $|a_i|<a^{i-1}$ for all $i<k'$. Then,
$D_{\sigma}(0)$ is the linearization disc of $f$ about the origin. In
$\widehat{K}$ we have  $\deg
(g,{\overline{D}_{\sigma}(0)})=k'$.
Furthermore, $f$ has an indifferent periodic point in
$\widehat{K}$ on the sphere $S_{\sigma}(0)$ of period
$\kappa\leq k'$, with multiplier $\lambda ^{\kappa }$.
\end{theorem}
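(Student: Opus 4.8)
The plan is to mirror the structure of the preceding theorem (the case $f(x)=\lambda x+a_{k'}x^{k'}$) and to reduce the entire statement to a single computation: that the coefficient $b_{k'}$ of the conjugacy $g$ satisfies the extremal condition (\ref{condition degree >1}), i.e.\@ $|b_{k'}|=a^{k'-1}/|1-\lambda ^m|$. Granting this, Lemma \ref{lemma sufficient cond per point} immediately yields $\deg(g,\overline{D}_{\sigma}(0))=k'$ in $\widehat{K}$, whereas Theorem \ref{theorem convergence divisible by p} already gives $\deg(g,D_{\sigma}(0))=1$. Applying Lemma \ref{lemma weierstrass degree and per points} with $\tau=\sigma<1/a$ and $d=k'$ then produces the indifferent periodic point on $S_{\sigma}(0)$ of period $\kappa\leq k'$ and identifies $D_{\sigma}(0)$ as the linearization disc. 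The assertion on the multiplier follows exactly as in the proof of the preceding theorem: since every nonlinear monomial of $f\in\mathcal{F}_{\lambda,a}^p(K)$ has degree divisible by $p$, we have $f'(x)\equiv\lambda$ in characteristic $p$, whence $(f^{\circ\kappa})'(\hat x)=\lambda^{\kappa}$.

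So the crux is the evaluation of $|b_{k'}|$. First I would invoke the general estimate of Lemma \ref{lemma b_k estimate p|k}, which always gives $|b_{k'}|\leq a^{k'-1}/|1-\lambda ^m|$; it therefore remains only to prove the reverse inequality. To do this I would apply the recursion (\ref{bk-equation}) directly at $k=k'$, writing $b_{k'}=(\lambda(1-\lambda ^{k'-1}))^{-1}\,\Sigma$, where $\Sigma$ is the sum over $1\leq l\leq k'-1$ and over the nonnegative solutions $\alpha_1,\dots,\alpha_{k'}$ of the index equations (\ref{index-equations}). Note that $|1-\lambda ^{k'-1}|=|1-\lambda ^m|$, since $m\mid k'-1$ and $p\nmid k'-1$ force $j=0$ in Lemma \ref{lemma distance char p}.

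Within $\Sigma$ I would isolate the single \emph{direct} term arising from $l=1$, $\alpha_{k'}=1$, and all other $\alpha_i=0$; its contribution is $b_1 a_{k'}=a_{k'}$, of modulus $|a_{k'}|=a^{k'-1}$ by hypothesis. The remaining work is to show that every other term has strictly smaller modulus, so that ultrametricity forces $|\Sigma|=a^{k'-1}$. Here both hypotheses enter. Since $\alpha_{k'}\geq 2$ is impossible and $\alpha_{k'}=1$ forces the direct term, any other term has $\alpha_{k'}=0$ and so uses only coefficients $a_i$ with $2\leq i\leq k'-1$, for which $|a_i|<a^{i-1}$ strictly; moreover at least one such $\alpha_i$ is positive because $l\leq k'-1$. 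Using the relation $\sum_{i\geq 2}(i-1)\alpha_i=k'-l$ extracted from (\ref{index-equations}), this gives $\prod_{i\geq 2}|a_i|^{\alpha_i}<a^{k'-l}$. Combined with the bound $|b_l|\leq a^{l-1}$ valid for $1\leq l\leq k'-1$ (which follows from Lemma \ref{lemma b_k estimate p|k}, as $k'-1<mp$ by (\ref{inequality k'-1<mp}) makes the floor exponent vanish), with $|a_1|=|\lambda|=1$, and with the integrality of the factorial coefficients, each such term is bounded by $|b_l|\prod_{i\geq 2}|a_i|^{\alpha_i}<a^{l-1}\cdot a^{k'-l}=a^{k'-1}$.

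The main obstacle I anticipate is precisely this term-by-term domination argument: one must check that no combination of lower-order coefficients can match the modulus $a^{k'-1}$ of the direct term, which is exactly where the strictness $|a_i|<a^{i-1}$ for $i<k'$ is indispensable (without it a competing term could tie, and cancellation would have to be ruled out separately). Granting the strict domination, ultrametricity gives $|\Sigma|=a^{k'-1}$ with no cancellation, hence $|b_{k'}|=a^{k'-1}/|1-\lambda ^m|$, which completes the reduction and, via the lemmas cited in the first paragraph, the proof.
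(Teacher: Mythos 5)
Your proposal is correct and takes essentially the same route as the paper: the paper's proof likewise reduces everything to showing $|b_{k'}|=a^{k'-1}/|1-\lambda^m|$ by isolating the term $b_1a_{k'}/(1-\lambda^{k'-1})$ in the recursion at $k=k'$ and using the minimality of $k'$, the strict bounds $|a_i|<a^{i-1}$ for $i<k'$, and Lemma \ref{lemma b_k estimate p|k} to see it strictly dominates, then invokes the earlier lemmas for the degree, the periodic point, and the multiplier. Your write-up merely spells out the term-by-term domination (via the index equations and $|b_l|\le a^{l-1}$ for $l<k'$) that the paper leaves implicit.
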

\begin{proof} Let $f\in \mathcal{F}_{\lambda,a}^p(K)$. Suppose $|a_{k'}|=a^{k'-1}$ and $|a_i|<a^{i-1}$
for all $i<k'$. For $k=k'$ and $l=1$ the equation
(\ref{index-equations}) has the solution $\alpha_{k'}=1$,
$\alpha_j=0$ for $j<k'$. Hence, for $k=k'$, the recursion formula
(\ref{bk-equation}) contains the term
\begin{equation}\label{dominating term k=k'}
b_1a_{k'}/(1-\lambda ^{k'-1}),
\end{equation}
where $b_1=1$, $|a_k'|=a^{k'-1}$, and $|1-\lambda
^{k'-1}|=|1-\lambda ^m|$. The minimality of $k'$ and the
assumption that $|a_i|<a^{i-1}$ for all $i<k'$ yields in view of
Lemma \ref{lemma b_k estimate p|k} that the term (\ref{dominating
term k=k'}) is strictly greater than all the other terms in the
recursion formula (\ref{bk-equation}). Hence, by ultrametricity,
$|b_k'|=a^{k'-1}|1-\lambda ^m|^{-1}$ as required. \end{proof}

\begin{corollary}
Let $f\in \mathcal{F}_{\lambda,a}^p(K)$  be of the form
\[
f(x)=\lambda x +\sum_{i\geq k'}a_ix^i, \quad
a=|a_{k'}|^{1/(k'-1)}>0.
\]
Then, $D_{\sigma}(0)$ is the linearization disc of $f$ about the origin.
In $\widehat{K}$ we have  $\deg
(g,{\overline{D}_{\sigma}(0)})=k'$.
Furthermore, $f$ has an indifferent periodic point in
$\widehat{K}$ on the sphere $S_{\sigma}(0)$ of period
$\kappa\leq k'$, with multiplier $\lambda ^{\kappa }$.
\end{corollary}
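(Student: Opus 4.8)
The plan is to recognize this corollary as a direct specialization of the previous theorem, so that no new argument is required beyond verifying that its two hypotheses are met. Recall that the previous theorem concludes exactly the three assertions we want---that $D_{\sigma}(0)$ is the linearization disc, that $\deg(g,\overline{D}_{\sigma}(0))=k'$ in $\widehat{K}$, and that $f$ has an indifferent periodic point on $S_{\sigma}(0)$ of period $\kappa\le k'$ with multiplier $\lambda^{\kappa}$---under the two assumptions $a=|a_{k'}|^{1/(k'-1)}$ and $|a_i|<a^{i-1}$ for all $i<k'$. Hence my whole task reduces to checking these two conditions for an $f$ of the stated form.

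First I would record that, since $f(x)=\lambda x+\sum_{i\ge k'}a_ix^i$, the expansion contains no monomials of degree strictly between $1$ and $k'$; that is, $a_i=0$ for every integer $i$ with $2\le i<k'$. Combined with the standing hypothesis $a=|a_{k'}|^{1/(k'-1)}>0$, this gives $|a_i|=0<a^{i-1}$ for all such $i$, because $a^{i-1}>0$. Thus the second hypothesis $|a_i|<a^{i-1}$ for $i<k'$ holds vacuously, while the first hypothesis $a=|a_{k'}|^{1/(k'-1)}$ is exactly what we have assumed. One should also note the consistency with membership in $\mathcal{F}_{\lambda,a}^p(K)$: that membership already fixes $a=\sup_{i\ge2}|a_i|^{1/(i-1)}$, and since the lower-degree coefficients vanish the supremum is effectively over $i\ge k'$, so assuming the value $a$ is attained at $i=k'$ is compatible with the definition of the family.

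With both hypotheses in hand, I would simply invoke the previous theorem to obtain the three conclusions verbatim. For completeness I might trace the mechanism once more: Lemma \ref{lemma sufficient cond per point} shows that $\deg(g,\overline{D}_{\sigma}(0))=k'>1$ precisely when $|b_{k'}|=a^{k'-1}/|1-\lambda^m|$, and the vanishing of $a_i$ for $i<k'$ forces the term $b_1a_{k'}/(1-\lambda^{k'-1})$ of the recursion (\ref{bk-equation}) to strictly dominate, giving this equality by ultrametricity; then Lemma \ref{lemma weierstrass degree and per points}, together with $\deg(g,D_{\sigma}(0))=1$ from Theorem \ref{theorem convergence divisible by p}, yields the indifferent periodic point of period $\kappa\le k'$, and the divisibility of all nonlinear exponents by $p$ makes $(f^{\circ n})'\equiv\lambda^n$, so the multiplier is $\lambda^{\kappa}$.

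Since the corollary is genuinely a special case, I do not expect any real obstacle. The only point demanding any care is the trivial observation that the strict inequality $|a_i|<a^{i-1}$ is satisfied by the zero coefficients---which is where the hypothesis $a>0$ is actually used---and the compatibility check above that the supremum defining $a$ is indeed attained at $k'$. Everything of substance has already been carried out in the previous theorem and in Lemmas \ref{lemma sufficient cond per point} and \ref{lemma weierstrass degree and per points}.
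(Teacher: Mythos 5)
Your proposal is correct and matches the paper's intent exactly: the paper states this corollary without proof as an immediate specialization of the preceding theorem, and your verification that $a_i=0$ for $2\le i<k'$ gives $|a_i|=0<a^{i-1}$ (using $a>0$) while $a=|a_{k'}|^{1/(k'-1)}$ is assumed, is precisely the implicit argument. No issues.
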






\section*{Acknowledgements}

I would like to thank
 Prof. Andrei Yu. Khrennikov for fruitful discussions and for introducing me to the
theory of $p$-adic dynamical systems and the problem on linearization. I thank
Robert L. Benedetto for his hospitality
during my visit in Amherst,  for fruitful
discussions,
and consultancy on non-Archimedean analysis.




\end{document}